\numberwithin{equation}{section}
\newcommand{\R}{\mathbb{R}}
\newtheorem{remark}{Remark}
\newtheorem{lemma}{Lemma}
\newtheorem{theorem}{Theorem}
\begin{document}
	
\title[Traveling waves in a local model for shallow water waves]{\bf On smooth and peaked traveling waves in a local model for shallow water waves}

\author{Spencer Locke}
\address[S. Locke]{Department of Mathematics, University of Michigan, Ann Arbor, MI 48109,  USA}
\email{lockespe@umich.edu}
	
\author{Dmitry E. Pelinovsky}
\address[D. E. Pelinovsky]{Department of Mathematics and Statistics, McMaster University, Hamilton, Ontario, Canada, L8S 4K1 and Department of Applied Mathematics, Nizhny Novgorod State Technical University, 24 Minin street, 603950 Nizhny Novgorod, Russia}
\email{pelinod@mcmaster.ca}

\begin{abstract}
We introduce a new model equation for Stokes gravity waves based on conformal transformations of Euler's equations. The local version of the model equation is relevant for dynamics of shallow water waves. It allows us to characterize the traveling periodic waves both in the case of smooth and peaked waves and to solve the existence problem exactly, albeit not in elementary functions. Spectral stability of smooth waves with respect to co-periodic perturbations is proven analytically based on the exact count of eigenvalues in a constrained spectral problem.
\end{abstract}

\maketitle

\footnote{Corresponding author: Dmitry E. Pelinovsky, e-mail: pelinod@mcmaster.ca}

\section{Introduction}

An irrotational motion of incompressible two-dimensional surface water waves can be fully described by means of evolution equations for two canonical variables in one spatial coordinate. This formalism was originated by V. Zakharov \cite{Zakharov68} in the context of stability of traveling periodic waves. 

One approach to develop this formalism systematically is based on the  Dirichlet-to-Neumann (D-N) operator \cite{Craig}. The two nonlinear evolution equations closed with the D-N operator have been studied in many works on water waves, including the recent study of modulational instability of traveling periodic waves \cite{Berti}. See also \cite{CD,Hur,Strauss} for other recent works where three more methods have been explored in the same context. 

Another approach to obtain a closed system of two nonlinear evolution equations for water waves is based on a conformal transformation which maps the fluid domain with a variable surface profile to a fixed rectangular domain. This formalism was introduced 
in \cite{Babenko,Tanvir} and has been explored in the context of traveling periodic waves in \cite{DZ1,DZ2,CC99} and more recently in \cite{Lush1,DS23,DS23SAPM,Lush2,Lush3}. Our work contributes to the analysis of the nonlinear evolution equations obtained in the latter approach. 

The approach based on conformal transformations has been used to tackle many mathematical problems related to water waves such as the existence of standing waves \cite{W1,W2} and bifurcations of quasi-periodic wave solutions from the standing periodic waves \cite{WZ1,WZ2,WZ3}. Holomorphic coordinates were used for analysis of well-posedness of the water wave equations \cite{Ifrim1,Ifrim2}. The particular problem addressed in our work 
is the coexistence of smooth and peaked traveling periodic waves 
for different intervals of wave speeds as well as the linear stability of waves with smooth profiles. 

\subsection{A new model equation}

The purpose of this paper is to introduce a new model equation which shares the same solutions as the traveling wave reduction of Euler's equations in \cite{Babenko} but simplifies the time evolution and, particularly, the linear stability analysis 
of the traveling periodic waves. This model equation can be written in the following nonlocal form,
\begin{equation}
\label{single-eq}
2 c T_h^{-1} \partial_t \eta = (c^2 K_h -1) \eta - \eta K_h \eta - \frac{1}{2} K_h \eta^2,
\end{equation}
where $\eta = \eta(u,t) \in \mathbb{R}$ is the surface elevation in the reference frame moving with the constant wave speed $c > 0$, $t \in \mathbb{R}$ is time, and $u$ is the spatial coordinate defined on the periodic domain $\mathbb{T} = \mathbb{R} \backslash (2\pi \mathbb{Z})$. The spatial coordinate $u$ arises after the conformal transformation of the fluid domain with variable surface elevation $\eta$ to a rectangle $[-h,0] \times [-\pi,\pi]$, where $h > 0$ is the fluid depth.
The linear skew-adjoint operator $T_h^{-1}$ in $L^2(\mathbb{T})$ is defined by the Fourier symbol
\begin{equation}
\label{operator-T}
\widehat{\left( T_h^{-1} \right)}_n = \left\{ \begin{array}{cl} -i \coth(hn), &\quad n \in \mathbb{Z} \backslash \{0\},  \\ 0, & \quad n = 0, \end{array} \right.
\end{equation}
whereas the linear, self-adjoint,  positive  
operator $K_h = T_h^{-1} \partial_u$ in $L^2(\mathbb{T})$ is defined by the Fourier symbol
\begin{equation}
\label{operator-K}
\widehat{\left( K_h \right)}_n = \left\{ \begin{array}{cl} n \coth(hn), &\quad n \in \mathbb{Z} \backslash \{0\},  \\ 0, & \quad n = 0. \end{array} \right.
\end{equation}
Appendix \ref{appendix} explains how the nonlocal evolution equation (\ref{single-eq}) arises in the context of the original Euler's equations. 

Let us obtain the conserved quantities for the nonlocal model (\ref{single-eq}). 
Taking the mean value of (\ref{single-eq}), we get the constraint
\begin{equation}
\label{zero-mean-new}
\oint \eta (1 + K_h \eta) du = 0,
\end{equation}
which represents the zero-mean constraint for the surface elevation $\eta$ in 
the physical spatial coordinate. Furthermore, differentiating (\ref{single-eq}) in $u$, multiplying by $\eta$ and integrating over the period of $\mathbb{T}$ yields 
\begin{align}
c \frac{d}{dt} \oint \eta K_h \eta du &= \oint (c^2 \eta K_h \eta_u - \eta \eta_u - \eta \eta_u K_h \eta - \eta^2 K_h \eta_u - \eta K_h \eta \eta_u ) du 
\notag \\
&= \oint \partial_u \left(\frac{1}{2} c^2 \eta K_h \eta - \frac{1}{2} \eta^2 
- \eta^2 K_h \eta \right) du = 0, \label{momentum-new}
\end{align}
where we have used self-adjointness of $K_h$ in $L^2(\mathbb{T})$ for every solution with $\eta, \eta_u, \eta \eta_u$ in the domain of $K_h$. It follows from (\ref{zero-mean-new}) and (\ref{momentum-new}) that the nonlocal evolution equation (\ref{single-eq}) admits two conserved quantities
\begin{equation}
\label{conserved-quantity}
\oint \eta du \qquad \mbox{\rm and} \qquad \oint \eta K_h \eta du.
\end{equation}
In addition, the evolution equation (\ref{single-eq}) can be written in the 
Hamiltonian form 
\begin{equation}
\label{action}
2c T_h^{-1} \partial_t \eta = \Lambda'_c(\eta), \quad {\rm with} \;\; 
\Lambda_c(\eta) := \frac{1}{2} \oint \left[ c^2 \eta K_h \eta - \eta^2 (1 + K_h \eta) \right] du,
\end{equation}
where $\Lambda_c(\eta)$ is the action related to the conserved energy of the fluid. Critical points of $\Lambda_c$ in the corresponding energy space satisfy the Euler--Lagrange equation 
\begin{equation}
\label{trav-Bab-eq}
(c^2 K_h - 1) \eta = \frac{1}{2} K_h \eta^2 + \eta K_h \eta,
\end{equation}
which is known as Babenko's equation because it 
coincides with the traveling wave reduction of Euler's equations after the conformal transformation \cite{Babenko}. In the context 
of the evolution equation (\ref{single-eq}) with $u$ defined 
in the reference frame moving with the wave speed $c$, solutions of (\ref{trav-Bab-eq}) correspond to the time-independent solutions of (\ref{single-eq}).

\subsection{Local model and main results}

In the deep water limit ($h \to \infty$), we have from (\ref{operator-T}) and (\ref{operator-K}) that
$$
\lim_{h \to \infty} T_h^{-1} = -\mathcal{H} \qquad {\rm and} \qquad  
\lim_{h \to \infty} K_h = -\mathcal{H} \partial_u, 
$$
where $\mathcal{H}$ is the periodic Hilbert transform defined by the Fourier symbol 
\begin{equation}
\label{Hilbert}
\widehat{\mathcal{H}}_n = i \; {\rm sgn}(n), \qquad n \in \mathbb{Z}.
\end{equation}
This work explores the shallow water limit ($h \to 0$), when 
we replace $T_h^{-1}$ and $K_h$ by $-\partial_u$ and $-\partial_u^2$ respectively. In other words, we study herein the local evolution equation 
\begin{equation}
\label{toy-model}
2 c \partial_u \partial_t \eta = (c^2 - 2 \eta) \partial^2_u \eta 
- (\partial_u \eta)^2 + \eta. 
\end{equation}
Appendix \ref{app-B} describes how the local model (\ref{toy-model}) arises 
from $T_h^{-1}$ and $K_h$ as $h \to 0$ and compares it with other phenomenological models for fluid dynamics. 

{\em It is important to emphasize that (\ref{toy-model}) is not 
the asymptotic reduction of (\ref{single-eq}) as $h \to 0$ but rather 
a toy model to understand existence and linear stability of traveling 
periodic waves  in the shallow water limit. }

The local equation (\ref{toy-model}) without the last term was derived in \cite{HS91} in a different (geometric) context and has been referred to as the Hunter--Saxton equation \cite{HZ94}. The same equation (\ref{toy-model}) with the last term was also discussed in \cite{Alber1,Alber2} in connection to the high-frequency limit of the Camassa--Holm equation, one of the toy models for physics of fluids with smooth and peaked waves. Integrability of 
(\ref{toy-model}) was established in \cite{Hone} together with other peaked wave equations such as the reduced Ostrovsky and short-pulse equations. 
Some traveling wave solutions of this and similar equations were studied with 
Hirota's bilinear method in \cite{Matsuno}.

Next we discuss the time evolution and the conserved quantities for the local model (\ref{toy-model}). Taking the mean value of (\ref{toy-model}) for smooth $2\pi$-periodic solutions and integrating by parts yields the constraint 
\begin{equation}
\label{zero-mean-toy}
\oint \left[ \eta + (\partial_u \eta)^2 \right] du = 0, 
\end{equation}
which corresponds to (\ref{zero-mean-new}) also after integration by parts.
Let $\Pi_0 : L^2(\mathbb{T}) \rightarrow L^2(\mathbb{T}) |_{\{1\}^T}$ be a projection operator to the periodic functions with zero mean. The evolution equation (\ref{toy-model}) can be written in the form 
\begin{equation}
\label{local-evolution}
2 c \partial_t \eta = (c^2 - 2 \eta) \partial_u \eta + \Pi_0 \partial_u^{-1} \Pi_0 \left[ (\partial_u \eta)^2 + \eta \right],
\end{equation}
where $\Pi_0 \partial_u^{-1} \Pi_0$ is uniquely defined on the zero-mean functions in $L^2(\mathbb{T})$ with the zero-mean constraint. The evolution equation (\ref{local-evolution}) is a nonlocal version of the inviscid Burgers equation. The initial-value problem for the inviscid Burgers equation is locally well-posed in $H^1_{\rm per}(\mathbb{T}) \cap W^{1,\infty}(\mathbb{T})$. 
Since the mapping 
$$
\Pi_0 \partial_u^{-1} \Pi_0 \left[ (\partial_u \eta)^2 + \eta \right] : H^1_{\rm per}(\mathbb{T}) \cap W^{1,\infty}(\mathbb{T}) \to H^1_{\rm per}(\mathbb{T}) \cap W^{1,\infty}(\mathbb{T})
$$
is bounded on every bounded subset, there exists a unique local solution 
of the evolution equation (\ref{local-evolution}) for every initial data 
in $H^1_{\rm per}(\mathbb{T}) \cap W^{1,\infty}(\mathbb{T})$. 

To get the conserved quantities, we multiply (\ref{toy-model}) by $\partial_u \eta$ and integrate over the period for smooth $2\pi$-periodic solutions $\eta$. This implies the conservation of 
\begin{equation}
\label{Q}
Q(\eta) := \oint (\partial_u \eta)^2 du,
\end{equation}
and in view of the constraint (\ref{zero-mean-toy}), the conservation of 
\begin{equation}
\label{M}
M(\eta) := \oint \eta du.
\end{equation}
The conserved quantities (\ref{Q}) and (\ref{M}) correspond to (\ref{conserved-quantity}). Furthermore, similar to (\ref{action}), we can write (\ref{local-evolution}) in the Hamiltonian form
\begin{equation}
\label{action-toy}
2c \partial_t \eta = - \Pi_0 \partial_u^{-1} \Pi_0 \left[ \frac{1}{2} c^2 Q'(\eta) - \frac{1}{2} H'(\eta) \right],
\end{equation}
where $H$ is the third conserved quantity given by 
\begin{equation}
\label{H}
H(\eta) := \oint \left[ \eta^2 + 2 \eta (\partial_u \eta)^2 \right] du.
\end{equation}

The existence of traveling periodic waves in the local model (\ref{toy-model}) is defined by the second-order equation 
\begin{equation}
\label{ode-model}
(c^2 - 2 \eta) \eta'' - (\eta')^2 + \eta = 0, \qquad u \in \mathbb{T},
\end{equation}
where $\eta = \eta(u)$ is the $2\pi$-periodic wave profile satisfying the constraint (\ref{zero-mean-toy}). 
The linear stability of the traveling wave with the profile $\eta$ 
is defined by the linearized equation 
\begin{equation}
\label{lin-model}
2 c \partial_t \hat{\eta} = -\Pi_0 \partial_u^{-1} \Pi_0 \mathcal{L} \hat{\eta}, \qquad 
\mathcal{L} := -\partial_u (c^2 - 2 \eta) \partial_u - 1 + 2 \eta'', 
\end{equation}
where $\hat{\eta} = \hat{\eta}(u,t)$ is the perturbation to the traveling wave with the profile $\eta = \eta(u)$ satisfying the orthogonality condition 
$\langle 1 - 2 \eta'', \hat{\eta} \rangle = 0$ with the standard inner product in $L^2(\mathbb{T})$. The orthogonality condition $\langle 1 - 2 \eta'', \hat{\eta} \rangle = 0$  follows by expanding the nonlinear constraint (\ref{zero-mean-toy}) near $\eta$. 

The following two theorems describe the main results of this work. Our results are formulated for the single-lobe periodic solutions with an even profile $\eta$ which possesses a single maximum on $\mathbb{T}$ placed at $u = 0$. Such single-lobe periodic solutions are often referred to as Stokes waves.

\begin{theorem}
	\label{theorem-1}
	There exist $c_* := \frac{\pi}{2 \sqrt{2}}$ and $c_{\infty} \in (c_*,\infty)$ such that the stationary equation (\ref{ode-model}) admits a unique single-lobe solution with the profile $\eta \in  C^{\infty}_{\rm per}(\mathbb{T})$ for every $c \in (1,c_*)$ such that 
\begin{equation}
\label{small-amplitude}	
\| \eta \|_{L^{\infty}} \to 0 \quad \mbox{\rm as} \;\; c \to 1
\end{equation} 
and a single-lobe solution with the profile $\eta \in C^0_{\rm per}(\mathbb{T})$ for every $c \in (c_*,c_{\infty})$ satisfying 
\begin{equation}
\label{2-3-singularity}
	\eta(u) = \frac{c^2}{2} - A(c) |u|^{2/3} + \mathcal{O}(|u|^{4/3}) \quad \mbox{\rm as} \;\; u \to 0,
\end{equation}
	for some constant $A(c) > 0$. At $c = c_*$, there exists a unique single-lobe solution with the profile $\eta \in C^0_{\rm per}(\mathbb{T}) \cap W^{1,\infty}(\mathbb{T})$ given explicitly as 
\begin{equation}
\label{quadratic}
	\eta(u) = \frac{1}{16} ( \pi^2 - 4 \pi |u| + 2 u^2), \qquad u \in [-\pi,\pi]
\end{equation}
and extended as a $2\pi$-periodic function on $\mathbb{T}$.
\end{theorem}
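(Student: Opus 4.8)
The plan is to analyze the second-order ODE \eqref{ode-model} as a planar dynamical system and extract the conserved quantity that governs the phase portrait. First I would observe that \eqref{ode-model} possesses a first integral: multiplying $(c^2-2\eta)\eta'' - (\eta')^2 + \eta = 0$ by an appropriate integrating factor should linearize the $(\eta')^2$ dependence. Indeed, writing $p := (\eta')^2$ and viewing $p$ as a function of $\eta$, the equation becomes a first-order linear ODE in $p(\eta)$, since $\eta'' = \tfrac12 \tfrac{dp}{d\eta}$ gives $(c^2-2\eta)\tfrac12 p' - p + \eta = 0$. Solving this linear equation yields an explicit first integral $E = E(\eta, (\eta')^2)$, and the single-lobe periodic orbits correspond to level sets enclosing the center equilibrium. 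I expect the equilibria to be located where $\eta = 0$ (so that $\eta'=0$ and $\eta''$ is determined), and the degeneracy at $\eta = c^2/2$, where the coefficient $c^2 - 2\eta$ of the highest derivative vanishes, to be the source of the peaked (non-smooth) behavior.

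Next I would organize the proof around the singular line $\eta = c^2/2$. For a single-lobe even solution with maximum at $u=0$, one has $\eta(0) = \max \eta$ and $\eta'(0)=0$. If the maximum stays strictly below $c^2/2$, the coefficient $c^2 - 2\eta$ is bounded away from zero along the orbit, the ODE is regular, and standard ODE theory gives a smooth periodic profile; this should cover the regime $c \in (1, c_*)$, with the small-amplitude limit \eqref{small-amplitude} obtained by linearizing about the zero equilibrium as $c \to 1$. The critical transition occurs exactly when the maximum reaches the singular value $c^2/2$. At that point the leading-order balance near $u=0$ is governed by the degenerate equation; substituting the ansatz $\eta(u) = \tfrac{c^2}{2} - A|u|^{\alpha}$ into \eqref{ode-model} and matching the dominant terms forces $\alpha = 2/3$, producing the singularity \eqref{2-3-singularity} with $A(c) > 0$ determined by the matching. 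The constant $c_\infty$ is then identified as the upper endpoint of wave speeds for which a periodic orbit reaching the singular line still closes up consistently with the zero-mean constraint \eqref{zero-mean-toy}.

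To pin down the threshold value $c_* = \tfrac{\pi}{2\sqrt 2}$ and the explicit profile \eqref{quadratic}, I would look for the borderline case where the maximum just touches $\eta = c^2/2$ but the solution retains Lipschitz regularity ($\eta \in W^{1,\infty}$) rather than the $C^0$ cusp of \eqref{2-3-singularity}. At this marginal speed the first integral should degenerate so that $(\eta')^2$ is a quadratic function of $\eta$ vanishing precisely at the maximum, which upon integration yields a piecewise-quadratic profile. I would then substitute the quadratic ansatz $\eta(u) = a + b|u| + d\,u^2$ directly into \eqref{ode-model}, solve for the coefficients $a,b,d$, and impose the zero-mean constraint \eqref{zero-mean-toy} together with $2\pi$-periodicity and evenness; this system of algebraic conditions should fix the coefficients to the values in \eqref{quadratic} and simultaneously force $c^2 = c_*^2 = \tfrac{\pi^2}{8}$, i.e.\ $c_* = \tfrac{\pi}{2\sqrt 2}$. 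The $W^{1,\infty}$ regularity is then transparent from the $|u|$ term, which produces a corner (peak) at $u=0$ with bounded one-sided derivatives.

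The main obstacle I anticipate is the rigorous control of the passage to the singular line and the verification that the resulting orbit is genuinely periodic and satisfies the mean-zero constraint. Because the coefficient of $\eta''$ vanishes at $\eta = c^2/2$, the reduced first-order system for $p(\eta)$ is singular there, and care is needed to show that the orbit reaches the singular line in finite ``time'' $u$, that the period integral $\oint du$ converges despite the $2/3$-power cusp, and that the map $c \mapsto \max\eta$ is monotone so that each admissible $c$ selects a unique profile. Establishing existence and uniqueness of $c_\infty$, and confirming that the single-lobe orbit persists for all $c \in (c_*, c_\infty)$ while the zero-mean constraint continues to be satisfiable, will require a careful monotonicity or continuity argument for the period and mean functionals as functions of the conserved energy level $E$, which is the technical heart of the existence statement.
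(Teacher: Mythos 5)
Your plan follows essentially the same route as the paper: the first integral $E(\eta,\eta')=\tfrac12(c^2-2\eta)(\eta')^2+\tfrac12\eta^2$ (your linear ODE for $p(\eta)=(\eta')^2$ is just another way of finding it), the phase-plane analysis organized around the singular line $\eta=c^2/2$, the $|u|^{2/3}$ ansatz for the cusped branch, the piecewise-parabolic profile at the borderline speed, and a period-function argument to select the $2\pi$-periodic orbits. Two points need correcting or completing.

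First, a conceptual slip in the borderline case: on the critical level set $\mathcal{E}_c=c^4/8$ the first integral gives $(\eta')^2=\frac{2\mathcal{E}_c-\eta^2}{c^2-2\eta}=\tfrac14(c^2+2\eta)$, which is \emph{linear} in $\eta$ and vanishes at the \emph{minimum} $\eta=-c^2/2$, not at the maximum; at the maximum $\eta=c^2/2$ it equals $c^2/2>0$. Had $(\eta')^2$ vanished at the maximum, the profile would be $C^1$ there and not peaked; it is precisely the nonzero one-sided slopes $\eta'(0^\pm)=\mp\pi/4$ that produce the corner. Your quadratic-ansatz computation would still succeed (it forces $d=1/8$, then $b=-2d\pi$ from $\eta'(\pm\pi)=0$ and $a=c^2/2$ from the peak condition, whence $c^2=\pi^2/8$), but the stated mechanism is backwards. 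Note also that the zero-mean constraint is automatic for periodic solutions of the ODE (integrate the equation over a period and integrate by parts), so it neither constrains the coefficients nor selects $c_\infty$; the selection is done entirely by the condition that the period equal $2\pi$.

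Second, the part you defer to the end is not a side technicality but the actual content of the theorem. The endpoints of the smooth range come from the limits $T(\mathcal{E},c)\to 2\pi c$ as $\mathcal{E}\to 0$ and $T(\mathcal{E},c)\to 4\sqrt{2}\,c$ as $\mathcal{E}\to\mathcal{E}_c$, so that $T(\cdot,c)=2\pi$ has a root in $(0,\mathcal{E}_c)$ exactly for $c\in\bigl(1,\pi/(2\sqrt{2})\bigr)$, and uniqueness requires the strict monotone decrease of $T$ in $\mathcal{E}$. The paper proves $\partial_{\mathcal{E}}T<0$ by the substitution $\eta=\sqrt{2\mathcal{E}}\,x$, differentiation under the integral sign, and a pointwise comparison of the integrand at $\pm x$; the singular branch needs a separate decomposition of the integral together with the growth $T=\mathcal{O}(\mathcal{E}^{1/4})$ as $\mathcal{E}\to\infty$ to define $c_\infty$ and to get existence on $(c_*,c_\infty)$. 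Without carrying out some version of this monotonicity analysis, your plan establishes neither the uniqueness claim for $c\in(1,c_*)$ nor the existence interval for the cusped waves.
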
 

\begin{remark}
	Figure \ref{fig-profiles} shows profiles of the periodic waves of Theorem \ref{theorem-1}. The profiles were obtained numerically by using solutions of the second-order equation (\ref{ode-model}).
\end{remark}
\begin{figure}[htb!]
	\centering
	\includegraphics[width=7cm,height=6cm]{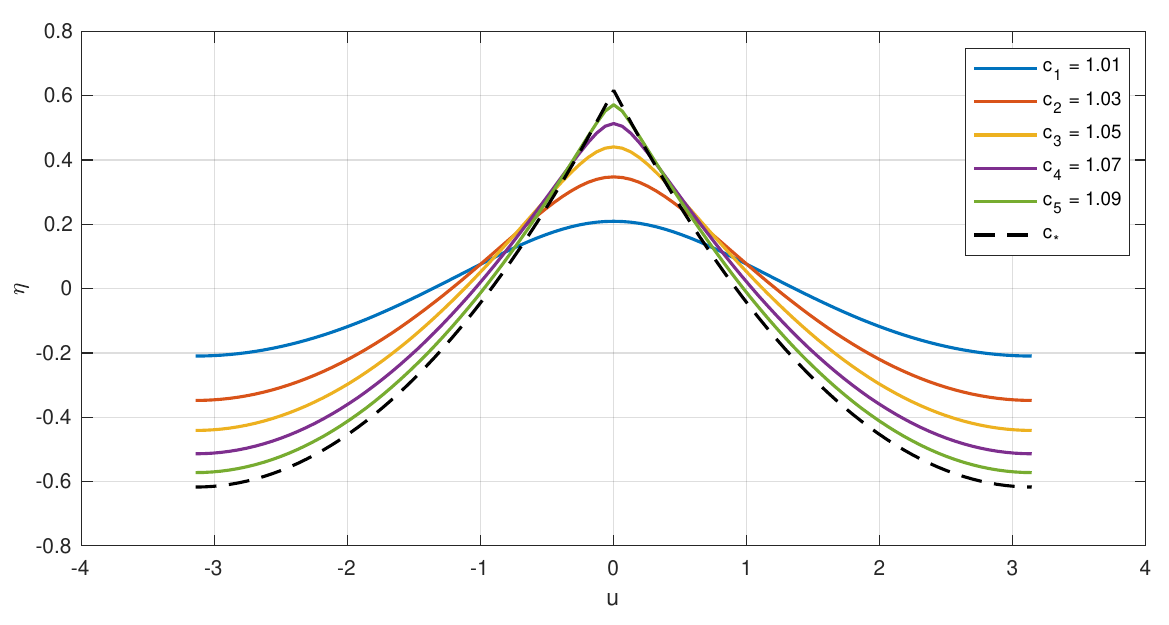} 
		\includegraphics[width=7cm,height=6cm]{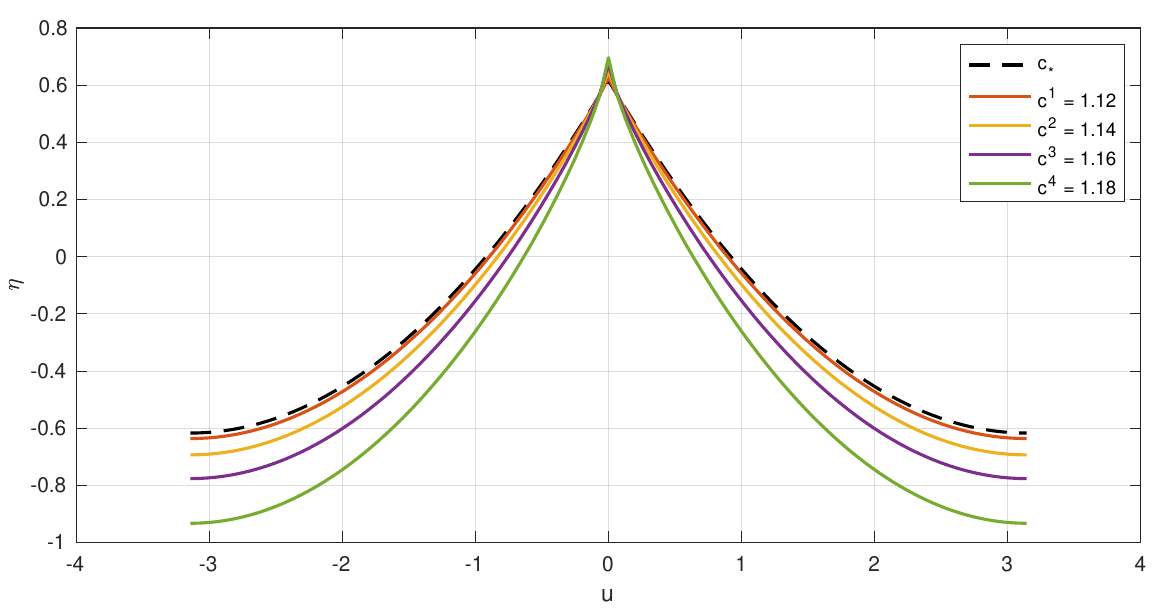} 
	\caption{Profiles in the family of smooth periodic waves (left) and 
	singular periodic waves (right) versus $u$ for different values of wave speeds $c \in (1,c_*)$ and $c \in (c_*,c_{\infty})$, respectively. The dashed line shows the profile of the peaked periodic wave (\ref{quadratic}).}
	\label{fig-profiles}
\end{figure}

\begin{remark}
	There exists another single-lobe solution with the singular behavior (\ref{2-3-singularity}) for every $c \in (0,c_{\infty})$ which is not included in the statement of Theorem \ref{theorem-1} as it does not bifurcate from the zero solution as $c \to 1$ compared to (\ref{small-amplitude}). See Figure \ref{fig-0} for the bifurcation diagram of all single-lobe solutions 
	of (\ref{ode-model}). 
\end{remark}

\begin{figure}[htb!]
	\centering
	\includegraphics[width=12cm,height=10cm]{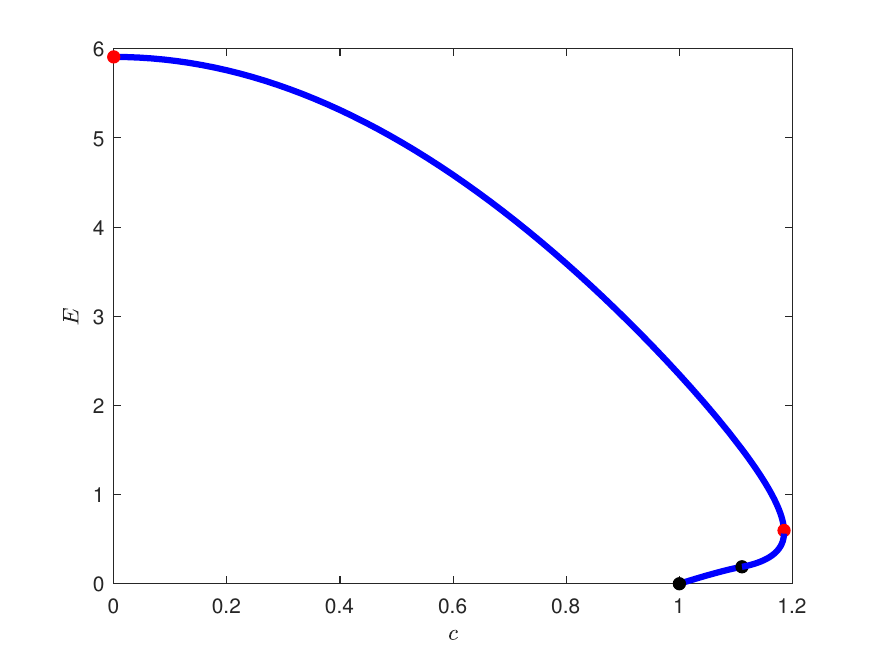} 
	\caption{Dependence of $E := \| \eta \|_{L^{\infty}}$ versus wave speed $c$ for periodic solutions of (\ref{ode-model}). The smooth solutions of Theorem \ref{theorem-1} exist between the black dots. The singular solutions of Theorem \ref{theorem-1} exist between the rightmost black and red dots. The singular solutions which are not included in the statement of Theorem \ref{theorem-1} exist between the red dots.}
	\label{fig-0}
\end{figure}

\begin{remark}
The special solution (\ref{quadratic}) has a peaked profile with a finite jump 
of the first derivative. It is usually referred to as the peaked periodic wave. Such peaked periodic waves are commonly known in other fluid models such as the reduced Ostrovsky equation \cite{BD19,GP1,GP2}, the Camassa--Holm equation \cite{MP-2021}, and the Degasperis--Procesi equation \cite{GP3}.
\end{remark}

\begin{remark}
	The singular behavior (\ref{2-3-singularity}) corresponds to the singularity of the limiting Stokes wave with a $120^o$ angle in the physical coordinate after the conformal transformation. The behavior was rigorously proven for the original Euler's equation in \cite{Amick,Plotnikov,Toland}, with many asymptotic results known in the literature (see \cite{Lushnikov} and references therein). Note that 
	$$
	\max_{u \in \mathbb{T}} \eta(u) = \eta(0) = \frac{c^2}{2}
	$$
	holds for every limiting Stokes wave for which the horizontal velocity 
	at the wave height coincides with the wave speed $c$, see the second equation in system (\ref{euler}) of Appendix \ref{appendix}.
\end{remark}

\begin{theorem}
	\label{theorem-2}
Consider the unique single-lobe solution with the profile $\eta \in C^{\infty}_{\rm per}(\mathbb{T})$ in Theorem \ref{theorem-1} for $c \in (1,c_*)$. 
For every initial data $\hat{\eta}_0 \in H^1_{\rm per}(\mathbb{T})$ satisfying 
\begin{equation}
\label{constraints-linear}
\langle 1, \hat{\eta}_0 \rangle = 0 \quad 
\mbox{\rm and}  \quad \langle \eta'', \hat{\eta}_0 \rangle = 0,
\end{equation}
there exists a unique solution $\hat{\eta} \in C^0(\mathbb{R},H^1_{\rm per}(\mathbb{T}))$ of the linearized equation (\ref{lin-model}) 
with $\hat{\eta} |_{t = 0} = \hat{\eta}_0$ and a unique $a \in C^1(\mathbb{R},\mathbb{R})$ such that 
\begin{equation}
\label{lin-stability}
\| \hat{\eta}(\cdot,t) - a(t) \eta' \|_{H^1_{\rm per}} \leq C \| \hat{\eta}_0 \|_{H^1_{\rm per}}, \quad |a'(t)| \leq C  \| \hat{\eta}_0 \|_{H^1_{\rm per}}, \quad t \in \mathbb{R},
\end{equation}
where $C > 0$ is independent of $\hat{\eta}_0$.
\end{theorem}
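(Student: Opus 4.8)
The plan is to run the standard Lyapunov/energy argument for a Hamiltonian linearization, organized around the conserved quadratic form generated by $\mathcal{L}$. Writing (\ref{lin-model}) as $\partial_t \hat\eta = \mathcal{J}\mathcal{L}\hat\eta$ with the skew-adjoint operator $\mathcal{J} := -\tfrac{1}{2c}\Pi_0 \partial_u^{-1}\Pi_0$, skew-adjointness gives at once that $\langle \mathcal{L}\hat\eta,\hat\eta\rangle$ is constant in $t$. Since $\mathcal{L}\eta'=0$ (translation invariance of (\ref{ode-model})) and $\mathcal{J}\eta''=-\tfrac{1}{2c}\eta'$, a short computation shows that $\langle 1,\hat\eta\rangle$ and $\langle \eta'',\hat\eta\rangle$ are also conserved, so the constraints (\ref{constraints-linear}) propagate in time. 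I would then decompose $\hat\eta(\cdot,t)=a(t)\eta'+w(t)$ with $a(t):=\langle \eta',\hat\eta\rangle/\|\eta'\|_{L^2}^2$, so that $w(t)$ lies in
\[
X := \{\, w \in H^1_{\rm per} : \langle 1,w\rangle = \langle \eta'',w\rangle = \langle \eta',w\rangle = 0 \,\},
\]
and, because $\mathcal{L}\eta'=0$, one has $\langle \mathcal{L}\hat\eta,\hat\eta\rangle = \langle \mathcal{L}w,w\rangle$. The entire theorem reduces to the coercivity estimate $\langle \mathcal{L}w,w\rangle \ge \delta \|w\|_{H^1_{\rm per}}^2$ on $X$ for some $\delta>0$.

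The core is the spectral structure of the Sturm--Liouville operator $\mathcal{L}$; note that $c^2-2\eta>0$ on $\mathbb{T}$ on the smooth branch, since $\max\eta=\eta(0)<c^2/2$ for $c<c_*$. I would split $L^2(\mathbb{T})$ into even and odd subspaces, both preserved by $\mathcal{L}$. On the odd sector $\eta'$ is the Dirichlet ground state on $(0,\pi)$ (odd, vanishing only at $0$ and $\pi$, with $\mathcal{L}\eta'=0$), so $\mathcal{L}\ge 0$ there with simple kernel $\{\eta'\}$. On the even sector I claim exactly two negative eigenvalues: one is exhibited by $\langle \mathcal{L}1,1\rangle=-2\pi<0$, and the bifurcation analysis at $c\to 1$, where $\mathcal{L}\to -\partial_u^2-1$ and the even mode $\cos u$ leaves the kernel at a rate controlled by $\mathcal{L}\partial_c\eta = 2c\eta''$, supplies the second. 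A continuation argument over $c\in(1,c_*)$ ruling out a crossing of zero by any even eigenvalue other than the protected $\eta'$ (which would signal a secondary bifurcation absent from the exact solution of Theorem \ref{theorem-1}) then fixes the count. Thus $n(\mathcal{L})=2$, the kernel $\{\eta'\}$ is simple, and the rest of the spectrum is strictly positive.

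To turn this into coercivity on $X$ I would invoke the constrained-eigenvalue index formula: with $\chi_1=1$ and $\chi_2=\eta''$, both orthogonal to $\ker\mathcal{L}=\{\eta'\}$ because $\eta$ is even, the number of negative eigenvalues of $\mathcal{L}$ on $\{\chi_1,\chi_2\}^\perp$ equals $n(\mathcal{L})-n(D)-z(D)$, where $D_{ij}=\langle \mathcal{L}^{-1}\chi_i,\chi_j\rangle$. Using $\mathcal{L}^{-1}\eta''=\tfrac{1}{2c}\partial_c\eta$ and $\mathcal{L}^{-1}1=-1+\tfrac1c\partial_c\eta$, together with $\langle \eta'',\partial_c\eta\rangle=-\tfrac12 Q'(c)$ and the exact relation $M(c)+Q(c)=0$ coming from (\ref{zero-mean-toy}), the off-diagonal entries cancel and one finds $\det D=\tfrac{\pi}{2c}Q'(c)$ with $\mathrm{tr}\,D<0$ whenever $Q'(c)>0$. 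Hence, provided $Q'(c)=\tfrac{d}{dc}\oint (\eta')^2\,du>0$ on $(1,c_*)$, we get $n(D)=2$ and $z(D)=0$, so $\mathcal{L}$ has no negative eigenvalue on $\{\chi_1,\chi_2\}^\perp$ and kernel exactly $\{\eta'\}$ there; restricting further to $\{\eta'\}^\perp$ makes the form strictly positive on $X$, and the uniform spectral gap combined with $c^2-2\eta\ge\kappa>0$ upgrades this to the $H^1$-coercivity above.

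The monotonicity $Q'(c)>0$ I would extract from the first integral $(c^2-2\eta)(\eta')^2=\eta(0)^2-\eta^2$ of (\ref{ode-model}), which renders both $Q(c)$ and the period as explicit integrals over $[-\eta(0),\eta(0)]$; the endpoints are consistent, with $Q\to 0$ as $c\to 1$ and $Q=\pi^3/24$ at the peaked wave (\ref{quadratic}) at $c=c_*$. With coercivity established the rest is routine: the first-order transport-type operator $\mathcal{J}\mathcal{L}$ generates a $C^0$-group on $H^1_{\rm per}$ (as in the well-posedness discussion after (\ref{local-evolution})), giving the unique $\hat\eta\in C^0(\mathbb{R},H^1_{\rm per})$; then $\delta\|w(t)\|_{H^1}^2\le \langle \mathcal{L}w(t),w(t)\rangle=\langle \mathcal{L}\hat\eta_0,\hat\eta_0\rangle\le C\|\hat\eta_0\|_{H^1}^2$ yields the first bound in (\ref{lin-stability}), while differentiating $a(t)$ and using $\mathcal{L}\hat\eta=\mathcal{L}w$ gives $a'(t)=\tfrac{1}{2c\|\eta'\|^2}\langle \mathcal{L}\Pi_0\eta,w\rangle$, bounded by $C\|w\|_{H^1}\le C\|\hat\eta_0\|_{H^1}$. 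I expect the two genuine obstacles to be the exact even-sector count $n(\mathcal{L})=2$ (the non-crossing step) and the monotonicity $Q'(c)>0$ via the quadrature; everything else is the standard Grillakis--Shatah--Strauss-type bookkeeping.
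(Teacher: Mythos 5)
Your architecture is the same as the paper's: conservation of the quadratic form $\langle \mathcal{L}\hat\eta,\hat\eta\rangle$, propagation of the constraints, the decomposition $\hat\eta = a(t)\eta' + w$, the count $n(\mathcal{L})=2$ with simple kernel $\{\eta'\}$, the $2\times 2$ constraint matrix built from $\mathcal{L}\partial_c\eta = 2c\eta''$ and $\mathcal{L}(c^{-1}\partial_c\eta - 1)=1$, and the reduction to monotonicity of $Q(c)$ (equivalently $\mathcal{M}(c) = -Q(c)$ by the zero-mean constraint). The identity $\det D = \tfrac{\pi}{2c}Q'(c)$ matches the paper's $\det A = -\tfrac{\pi}{2c}\mathcal{M}'(c)$, and your formula for $a'(t)$ is a correct variant of the paper's closing estimate.

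There is, however, a genuine gap at the step you yourself flag as an ``obstacle'': the monotonicity $Q'(c)>0$ on all of $(1,c_*)$. Verifying that $Q\to 0$ as $c\to 1$ and $Q = \pi^3/24$ at $c=c_*$ only shows that $Q$ increases on average; it does not rule out non-monotone behaviour in between, and without strict monotonicity at every $c$ the constrained count can degenerate (the $\mathcal{M}'(c)=0$ case produces a double zero eigenvalue of $\mathcal{L}|_{\mathcal{X}_c}$ and the coercivity constant collapses). The paper's Lemma on this point is real work: it differentiates $\mathcal{M}(\mathcal{E}(c),c)$ along the curve $T(\mathcal{E}(c),c)=2\pi$ using the chain rule with $\mathcal{E}'(c) = -\partial_c T/\partial_{\mathcal{E}}T$, reduces $\mathcal{M}'(c)$ to an explicit combination $\Delta(\mathcal{E},c)$ of quadrature integrals, and proves $\Delta<0$ by a symmetrization (Cauchy--Schwarz) identity, $\bigl(\int \eta\,d\mu\bigr)^2 - \bigl(\int d\mu\bigr)\bigl(\int \eta^2\,d\mu\bigr) = -\tfrac12\iint (\eta_1-\eta_2)^2\,d\mu\,d\mu < 0$, plus a separate sign argument for the remaining term. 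You would need to supply this (or an equivalent) computation. A secondary, smaller gap: your continuation argument for the even-sector eigenvalue count needs the kernel of $\mathcal{L}$ to remain exactly $\mathrm{span}(\eta')$ for every $c\in(1,c_*)$, and in the even sector this is precisely the statement that the second solution $\partial_{\mathcal{E}}\eta$ of $\mathcal{L}\hat\eta=0$ fails to be periodic, which rests on $\partial_{\mathcal{E}}T(\mathcal{E}(c),c)\neq 0$; you should make that dependence explicit rather than appealing to the absence of secondary bifurcations in Theorem \ref{theorem-1}, which by itself does not preclude a zero crossing of an even eigenvalue. The paper instead gets the count directly from the monodromy datum $\theta = \partial_{\mathcal{E}}T\cdot\sqrt{2\mathcal{E}}/(c^2-2\sqrt{2\mathcal{E}}) < 0$ via Proposition 1 of \cite{GMNP}, which packages both the count and the kernel simplicity in one stroke.
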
 

\begin{remark}
Constraints (\ref{constraints-linear}) are preserved in the time 
evolution of the linearized equation (\ref{lin-model}) because they are linearizations of the conserved quantities (\ref{Q}) and (\ref{M}). 
In view of  the constraint $\langle 1 - 2 \eta'', \hat{\eta} \rangle = 0$ imposed on solutions of the linearized equation (\ref{lin-model}), only one constraint 
in (\ref{constraints-linear}) is linearly independent. Imposing 
$\langle \eta'', \hat{\eta}_0 \rangle = 0$ is equivalent to the requirement 
that the perturbation $\hat{\eta}$ does not change the conserved quantity $Q$ in (\ref{Q}) up to the linear approximation. The bound (\ref{lin-stability}) expresses the concept of linear orbital stability of the orbit $\{ \eta(\cdot + \mathfrak{u}) \}_{\mathfrak{u} \in \mathbb{T}}$ of the traveling periodic wave with the profile $\eta$.
\end{remark}

\begin{remark}
The linear orbital stability of Theorem \ref{theorem-2} implies spectral stability 
of the traveling periodic wave in the sense that the spectrum of the associated linearized operator $\partial_u^{-1} \mathcal{L}$ in $L^2(\mathbb{T})$ belongs to $i \mathbb{R}$. It is also worthwhile to point out that an eigenfunction $\hat{\eta}_0$ of the spectral stability problem 
\begin{equation}
\label{spec-stab-KdV}
\partial_u^{-1} \mathcal{L} \hat{\eta}_0 = \lambda_0 \hat{\eta}_0, \qquad \eta_0 \in H^1_{\rm per}(\mathbb{T})
\end{equation}
for every nonzero eigenvalue $\lambda_0 \in \mathbb{C} \backslash \{0\}$ must satisfy the two constraints in (\ref{constraints-linear}). The spectral stability problem in the form (\ref{spec-stab-KdV}) was also considered in \cite{SS18}.
\end{remark}

\begin{remark}
	The proof of Theorem \ref{theorem-2} relies on the construction 
	of the coercive quadratic form $\langle \mathcal{L}\hat{\eta}, \hat{\eta} \rangle$ for $\hat{\eta} \in H^1_{\rm per}(\mathbb{T})$ under the three constraints: 
	$$
	\langle 1, \hat{\eta} \rangle = \langle \eta', \hat{\eta} \rangle = \langle \eta'', \hat{\eta} \rangle = 0.
	$$ 
	The quadratic form is invariant in the time evolution of the linearized equation (\ref{lin-model}). This yields the energetic stability of the traveling periodic wave, ensuring that the periodic wave with the profile $\eta$ is a local minimizer of the energy $H$ subject to fixed $Q$ and $M$ in $H^1_{\rm per}(\mathbb{T})$. If local well-posedness of the nonlinear evolution equation (\ref{local-evolution}) can be shown in $H^1_{\rm per}(\mathbb{T})$, then the energetic stability implies the nonlinear orbital stability of the traveling periodic wave. However, the local well-posedness of (\ref{local-evolution}) holds only in $H^1_{\rm per}(\mathbb{T}) \cap W^{1,\infty}(\mathbb{T})$ and the control of $\| \partial_u \hat{\eta} \|_{L^{\infty}}$ for the perturbation $\hat{\eta}$ does not follow from the conserved quantities (\ref{Q}), (\ref{M}), and (\ref{H}).
\end{remark}

\subsection{Discussion}

The local model (\ref{toy-model}) shows a pattern of the existence and stability of traveling periodic waves parameterized by the wave speed $c$. There is a continuum of wave speeds for the smooth waves with profile $\eta \in C^{\infty}(\mathbb{T})$ bifurcating from the linear limit in (\ref{small-amplitude}) and a continuum of wave speeds for the cusped waves with the profile $\eta \in C^0(\mathbb{T})$ satisfying (\ref{2-3-singularity}). The two continuous families are connected together at a particular value of the wave speed $c = c_*$ for which the wave is peaked with the profile $\eta \in C^0(\mathbb{T}) \cap W^{1,\infty}(\mathbb{T})$. The same phenomenon is observed in the Camassa--Holm equation \cite{GMNP,Len2} and the Degasperis--Procesi equation \cite{GP3,Len1}.

It is rather remarkable that exactly the same $|u|^{2/3}$ singularity in the limiting wave profile with 
$$
\max_{u \in \mathbb{T}} \eta(u) = \eta(0) = \frac{c^2}{2}
$$
is recovered by the local model (\ref{toy-model}) as predicted by the full model for any depth $h$ \cite{Plotnikov}. After the conformal transformation, this singularity yields the limiting Stokes wave with the $120^o$ angle in the physical coordinates. More precise details of such singular profiles are  beyond the current capacities of the asymptotic \cite{Lushnikov} or numerical \cite{Lush1,Lush3} methods. The local model (\ref{toy-model}) gives  precise conclusions that the $|u|^{2/3}$ singularity is obtained in a range of wave speeds $c$ and that the borderline wave profile $\eta \in H^1_{\rm per}(\mathbb{T}) \cap W^{1,\infty}(\mathbb{T})$ has a peaked profile at $c = c_*$ before getting the $|u|^{2/3}$ singularity for $c > c_*$. This might be an artefact of the local model (\ref{toy-model}) since the nonlocal models typically predict only $|u|^{2/3}$ singularity in the fluid models, see \cite{LP24}.

Stability of the traveling periodic waves with singular profiles is a complicated problem, which is out of reach in the current analytical and numerical methods in the nonlocal models \cite{DS23,Lush2}. The local model (\ref{toy-model}) is a promising candidate for showing linear instability of the peaked wave (based on a similar analysis in \cite{GP1,GP2} and \cite{MP-2021}) and for attacking linear instability of the cusped wave with the $|u|^{2/3}$ singularity. 

The remainder of this paper is organized as follows. Section \ref{sec-2} contains the proof of Theorem \ref{theorem-1} on the existence of smooth and peaked periodic waves. Section \ref{sec-3} gives the proof of Theorem \ref{theorem-2} on linear stability of the smooth periodic waves. Appendix \ref{appendix} reviews the Euler's equations after the conformal transformation and discusses how the nonlocal model equation (\ref{single-eq}) arises. Appendix \ref{app-B} describes the local model (\ref{toy-model}) in the context of other phenomenological models for dynamics of fluid surfaces.

\section{Existence of smooth and peaked traveling periodic waves}
\label{sec-2}

Here we consider the single-lobe periodic solutions of the second-order equation (\ref{ode-model}). Recall that a single-lobe periodic solution has the even profile $\eta$ with a single maximum on $\mathbb{T}$ placed at $u = 0$. Theorem \ref{theorem-1} is proven by using the period function for the planar Hamiltonian systems used in a similar context in \cite{GMNP,GP0,GP3,Long}.

We start with the first-order invariant of the second-order equation (\ref{ode-model}) given by the following lemma.

\begin{lemma}
	\label{lemma-1}
	For every solution $\eta \in C^2(a,b)$ of the second-order equation 
	(\ref{ode-model}) with $-\infty \leq a < b \leq \infty$, the following function
	\begin{equation}
	\label{first-order}
E(\eta,\eta') := \frac{1}{2} (c^2 - 2 \eta) (\eta')^2 + \frac{1}{2} \eta^2
	\end{equation}
	is constant for $u \in (a,b)$.
\end{lemma}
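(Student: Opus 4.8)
The plan is to verify directly that $E(\eta,\eta')$ is a first integral of the \emph{autonomous} second-order equation (\ref{ode-model}) by differentiating it along solutions. Since (\ref{ode-model}) carries no explicit dependence on the independent variable $u$, the standard device is to multiply the equation by the integrating factor $\eta'$; the resulting expression should collapse into the total $u$-derivative of $E$, which is exactly the combination appearing in the definition (\ref{first-order}).

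Concretely, for an arbitrary $\eta \in C^2(a,b)$ I would compute $\frac{dE}{du}$ using the product and chain rules. Differentiating the first term $\frac{1}{2}(c^2-2\eta)(\eta')^2$ produces two contributions, from the factor $(c^2-2\eta)$ and from the factor $(\eta')^2$, and differentiating $\frac{1}{2}\eta^2$ gives $\eta\eta'$. Collecting terms yields
\[
\frac{dE}{du} = -\eta'(\eta')^2 + (c^2-2\eta)\eta'\eta'' + \eta\eta' = \eta'\left[(c^2-2\eta)\eta'' - (\eta')^2 + \eta\right].
\]
The bracketed factor is precisely the left-hand side of the stationary equation (\ref{ode-model}). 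Hence, for every solution $\eta$ of (\ref{ode-model}) the bracket vanishes identically on $(a,b)$, so $\frac{dE}{du} = 0$ there.

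To conclude, I would invoke that $\eta \in C^2(a,b)$ guarantees that $u \mapsto E(\eta(u),\eta'(u))$ is a $C^1$ function on the connected interval $(a,b)$ whose derivative vanishes everywhere; therefore $E$ is constant on $(a,b)$, which is the claim.

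There is no genuine obstacle here: the statement is a routine verification of a conserved quantity for an autonomous second-order ODE. The only point requiring a moment of care is the bookkeeping in differentiating $\frac{1}{2}(c^2-2\eta)(\eta')^2$, where both factors depend on $u$; tracking the two contributions correctly is what makes the cubic term $-\eta'(\eta')^2$ combine with the remaining terms to reproduce exactly the factor $(c^2-2\eta)\eta'' - (\eta')^2 + \eta$ from (\ref{ode-model}). Everything else is immediate.
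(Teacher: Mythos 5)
Your proposal is correct and follows exactly the same route as the paper: differentiate $E$ along a solution, factor out $\eta'$, and observe that the bracket is the left-hand side of (\ref{ode-model}), hence $\frac{dE}{du}=0$ on the connected interval $(a,b)$. The computation matches the paper's term by term, so there is nothing to add.
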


\begin{proof}
It is based on the elementary computation:
\begin{align*}
	\frac{d}{du} E(\eta,\eta') &= (c^2 - 2 \eta) \eta' \eta'' - (\eta')^3 + \eta \eta' \\
	&= \eta' \left[ (c^2 - 2 \eta) \eta'' - (\eta')^2 + \eta \right] \\
	&= 0,
\end{align*}
since $\eta \in C^2(a,b)$ satisfies (\ref{ode-model}).
\end{proof}

The next lemma explores the phase portrait for the second-order equation (\ref{ode-model}) on the phase plane $(\eta,\eta')$ obtained from the level curves of $E(\eta,\eta')$ in (\ref{first-order}). We obtain the existence of smooth and singular solutions in terms of the level $\mathcal{E}$ of $E(\eta,\eta')$.

\begin{lemma}
	\label{lemma-2}
For every $c > 0$, there exists $\mathcal{E}_c := \frac{c^4}{8}$ such that every periodic solution to (\ref{ode-model}) with profile $\eta \in C^{\infty}(\mathbb{R})$ belongs to $E(\eta,\eta') = \mathcal{E}$ with $\mathcal{E}\in (0,\mathcal{E}_c)$. For $E(\eta,\eta') = \mathcal{E}_c$, the only solution to (\ref{ode-model}) is the parabola 
\begin{equation}
\label{parabola}
\eta(u) = -\frac{c^2}{2} + \frac{(u-u_0)^2}{8},
\end{equation}
with arbitrary $u_0 \in \mathbb{R}$. For $E(\eta,\eta') = \mathcal{E}$ with $\mathcal{E}\in (\mathcal{E}_c,\infty)$, there exist no bounded solutions to (\ref{ode-model}) with profile $\eta \in C^{\infty}(\mathbb{R})$.
\end{lemma}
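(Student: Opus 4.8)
The plan is to read off the phase portrait of the planar system attached to (\ref{ode-model}) in the variables $(\eta,p)$ with $p:=\eta'$, using the first integral $E(\eta,p)=\mathcal{E}$ of Lemma \ref{lemma-1}. The decisive feature is the singular line $\eta=c^2/2$, on which the leading coefficient $c^2-2\eta$ of (\ref{ode-model}) vanishes. Solving $E=\mathcal{E}$ for $p^2$ gives
$$
p^2=\frac{2\mathcal{E}-\eta^2}{c^2-2\eta},
$$
so the turning points $p=0$ sit at $\eta=\pm\sqrt{2\mathcal{E}}$, while $p^2$ degenerates on the singular line. First I would note that the only equilibrium is the center at the origin (forcing $\mathcal{E}>0$ for any nonconstant periodic orbit), and that the level curve of $E$ first reaches the singular line precisely when $\sqrt{2\mathcal{E}}=c^2/2$, i.e. when $\mathcal{E}=\mathcal{E}_c=c^4/8$. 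This tangency is exactly what pins down the critical level.

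For $\mathcal{E}\in(0,\mathcal{E}_c)$ one has $\sqrt{2\mathcal{E}}<c^2/2$, so the orbit stays strictly inside the regular region $\{\eta<c^2/2\}$ and closes up between the two turning points $\eta=\pm\sqrt{2\mathcal{E}}$, each genuine (nonzero $\eta''$ from (\ref{ode-model})), producing the smooth single-lobe periodic profile. For $\mathcal{E}=\mathcal{E}_c$ the numerator and denominator share the factor $c^2/2-\eta$, and after cancellation the level relation collapses to $p^2=(c^2+2\eta)/4$, which I would identify as exactly the orbit of the parabola (\ref{parabola}) by direct substitution. For $\mathcal{E}>\mathcal{E}_c$ I would split into the two half-planes: in $\{\eta<c^2/2\}$ the single turning point is $\eta=-\sqrt{2\mathcal{E}}$, and as the orbit rises toward the singular line one has $2\mathcal{E}-\eta^2\to 2(\mathcal{E}-\mathcal{E}_c)>0$ while $c^2-2\eta\to 0^+$, so $p\to+\infty$ in finite $u$, yielding the $|u|^{2/3}$ cusp and precluding a $C^\infty(\mathbb{R})$ solution on that branch; in $\{\eta>c^2/2\}$ the only turning point $\eta=\sqrt{2\mathcal{E}}$ is a local minimum ($\eta''>0$) from which the orbit escapes to $\eta=+\infty$, hence is unbounded. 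Either way no bounded $C^\infty(\mathbb{R})$ solution survives.

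The step I expect to be the main obstacle is the uniqueness claim at $\mathcal{E}=\mathcal{E}_c$: because (\ref{ode-model}) degenerates on $\eta=c^2/2$, standard Picard uniqueness fails exactly where the orbit crosses that line. The resolution I would pursue is to observe that along the critical orbit the crossing occurs with $p^2=c^2/2\neq 0$, so that $(\eta')^2-\eta\to 0$ at the same rate as $c^2-2\eta$; an L'Hôpital computation applied to $\eta''=\big((\eta')^2-\eta\big)/(c^2-2\eta)$ then forces $\eta''\to \tfrac14$, matching the parabola and showing the solution passes through the singular line smoothly. Using $\eta$ as the independent variable near the crossing (legitimate since $p\neq 0$) desingularizes the equation and yields a regular first-order problem for $p^2(\eta)$ with a unique solution. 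Since every $C^\infty(\mathbb{R})$ solution at this level must lie on the single orbit $p^2=(c^2+2\eta)/4$ and is carried across the singular line in finite $u$ with nonzero speed, it coincides with (\ref{parabola}) up to the translation $u_0$.
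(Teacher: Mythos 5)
Your proposal is correct and follows essentially the same route as the paper: a phase-plane analysis of the level sets of the first integral, with the critical energy $\mathcal{E}_c=c^4/8$ identified as the level at which the turning point $\eta=\sqrt{2\mathcal{E}}$ meets the singular line $\eta=c^2/2$, the cancellation $p^2=\tfrac14(c^2+2\eta)$ at that level yielding the parabola, and the blow-up of $\eta'$ at the singular line excluding bounded smooth solutions for $\mathcal{E}>\mathcal{E}_c$. Your additional desingularization argument for the smooth crossing of $\eta=c^2/2$ at the critical level is a welcome refinement of the paper's terser computation $\eta''=\tfrac14$.
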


\begin{proof}
	The only equilibrium point of (\ref{ode-model}) is the center $(\eta,\eta') = (0,0)$, which corresponds to the minimum of $E(\eta,\eta')$ if $c > 0$. Every periodic solution to (\ref{ode-model}) belongs to the period annulus, which is the largest punctured neighbourhood of the center $(0,0)$ consisting entirely of periodic orbits. 
	
The phase portrait from the level curves of $E(\eta,\eta') = \mathcal{E}$ is shown on Figure \ref{fig-1}. Each level curve defines the profile $\eta$ from integration of 
\begin{equation}
\label{quad}	
	\left(\frac{d \eta}{du} \right)^2 = \frac{2 \mathcal{E} - \eta^2}{c^2 - 2 \eta}.
\end{equation}
The vertical line corresponds to $\eta = \frac{c^2}{2}$ and divides the phase plane into two half planes. For $\eta > \frac{c^2}{2}$, the level curves of $E(\eta,\eta') = \mathcal{E}$ contain no bounded solutions. For $\eta < \frac{c^2}{2}$, bounded level curves exist for $\mathcal{E}\in (0,\mathcal{E}_c)$ with $\mathcal{E}_c := \frac{c^4}{8}$ and contain periodic solutions with profile $\eta \in C^{\infty}(\mathbb{R})$. For $\mathcal{E} = \mathcal{E}_c$, all solutions are given by integrating 
\begin{equation}
\label{quad-peaked}
	\left(\frac{d \eta}{du} \right)^2 = \frac{2 \mathcal{E}_c - \eta^2}{c^2 - 2 \eta} = \frac{1}{4} (c^2 + 2 \eta).
\end{equation}
Differentiating in $u$ gives $\eta''(u) = \frac{1}{4}$. Integrating twice and using (\ref{quad-peaked}) yields (\ref{parabola}) with profile $\eta \in C^{\infty}(\mathbb{R})$. Finally, for $\mathcal{E} > \mathcal{E}_c$, the level curve reaches $\eta = \frac{c^2}{2}$ with the singularity of $\eta'$, which rules out the existence of bounded solutions with profile $\eta \in C^{\infty}(\mathbb{R})$. 	
\end{proof}

	\begin{figure}[htb!]
	\centering
	\includegraphics[width=12cm,height=10cm]{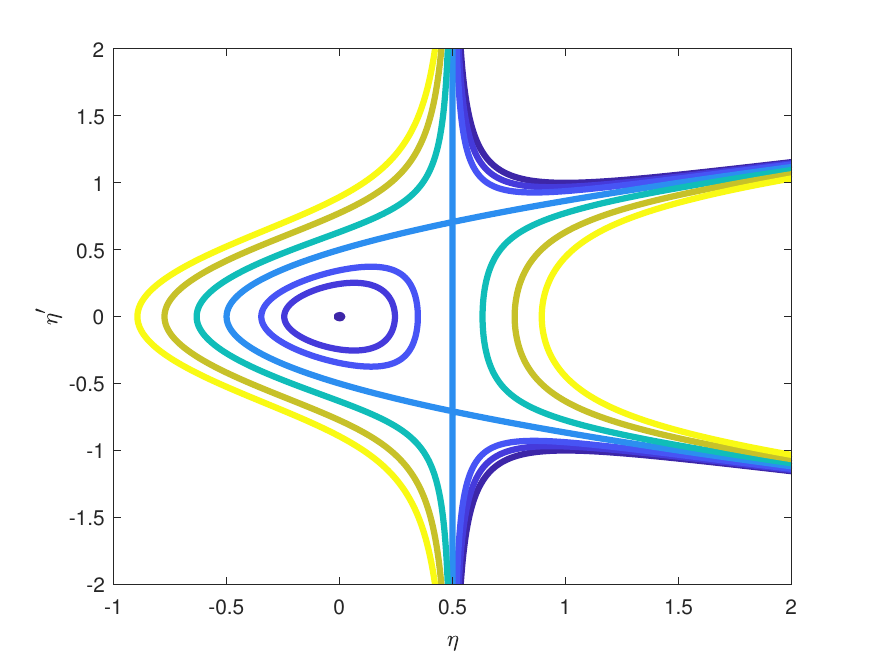}
	\caption{Phase portrait from the level curves of $E(\eta,\eta') = \mathcal{E}$ for $c = 1$.}
	\label{fig-1}
\end{figure}

The next lemma clarifies how the solutions for $\eta < \frac{c^2}{2}$ reach the singularity line $\eta = \frac{c^2}{2}$.

\begin{lemma}
	\label{lemma-3}
	Let $\eta \in C^0(u_-,u_+)$ be a solution for the level curve $E(\eta,\eta') = \mathcal{E}$ for $\mathcal{E} \in (\mathcal{E}_c,\infty)$ such that 
	$\eta(u) \to \pm \frac{c^2}{2}$ as $u \to u_{\pm}$. Then, $-\infty < u_- < u_+ < \infty$ and the solution satisfies 
	\begin{equation}
	\label{sol-expansion}
	\eta(u) = \frac{c^2}{2} - \frac{3^{2/3}}{2^{2/3}} ( \mathcal{E} - \mathcal{E}_c)^{1/3} |u-u_{\pm}|^{2/3} + \mathcal{O}(|u-u_{\pm}|^{4/3}) \qquad \mbox{\rm as} \;\; u \to u_{\pm}.
	\end{equation}
\end{lemma}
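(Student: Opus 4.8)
The plan is to treat the separated first-order equation (\ref{quad}) and to invert the relation $u=u(\eta)$ in a neighbourhood of the singular line $\eta=\tfrac{c^2}{2}$. Because (\ref{ode-model}) is invariant under the reflection $u\mapsto -u$, it suffices to analyze one endpoint, say $u_+$, where $\eta(u)\to\tfrac{c^2}{2}$ with $\eta'\to+\infty$; the endpoint $u_-$, where $\eta'\to-\infty$, is handled identically by symmetry, and the solution stays smooth as it passes through the interior turning point $\eta=-\sqrt{2\mathcal{E}}$, at which $\eta'=0$. Near $u_+$ the solution is strictly monotone, since $(\eta')^2>0$ there by (\ref{quad}) and the hypothesis $\mathcal{E}>\mathcal{E}_c$ gives $2\mathcal{E}-\eta^2\to 2(\mathcal{E}-\mathcal{E}_c)>0$ as $\eta\to\tfrac{c^2}{2}$. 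Hence I may regard $u$ as a function of $\eta$ and write
\[
u_+ - u = \int_\eta^{c^2/2}\sqrt{\frac{c^2 - 2s}{2\mathcal{E} - s^2}}\,ds .
\]

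First I would establish finiteness of the interval. The integrand above is continuous on $[\eta,\tfrac{c^2}{2}]$ and, since $\sqrt{c^2-2s}\to 0$ while $2\mathcal{E}-s^2$ stays bounded away from zero as $s\to\tfrac{c^2}{2}$, it is in fact bounded; therefore the integral converges and $u_+<\infty$. The same estimate applies at $u_-$, and the passage through the regular minimum $\eta=-\sqrt{2\mathcal{E}}$ costs only finite $u$ (there the integrand has an integrable square-root singularity), so altogether $-\infty<u_-<u_+<\infty$.

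Next I would extract the leading asymptotics. Substituting $s=\tfrac{c^2}{2}-\sigma$ gives $c^2-2s=2\sigma$ and $2\mathcal{E}-s^2=2(\mathcal{E}-\mathcal{E}_c)+c^2\sigma-\sigma^2$, so with $w:=\tfrac{c^2}{2}-\eta$,
\[
u_+ - u = \frac{1}{\sqrt{\mathcal{E}-\mathcal{E}_c}}\int_0^{w}\sqrt{\sigma}\left(1+\frac{c^2\sigma-\sigma^2}{2(\mathcal{E}-\mathcal{E}_c)}\right)^{-1/2}d\sigma .
\]
Expanding the bracket in powers of $\sigma$ and integrating term by term produces
\[
u_+ - u = \frac{1}{\sqrt{\mathcal{E}-\mathcal{E}_c}}\left(\frac{2}{3}\,w^{3/2}+\mathcal{O}(w^{5/2})\right),
\]
which exhibits the dominant $w^{3/2}$ balance responsible for the $2/3$ exponent.

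Finally, I would invert this monotone relation to recover $w$, and hence $\eta$, as a function of $u$. Writing the identity above as $w^{3/2}=\tfrac{3}{2}\sqrt{\mathcal{E}-\mathcal{E}_c}\,(u_+-u)\bigl(1+\mathcal{O}(w)\bigr)$ and solving by the implicit function theorem (or a short bootstrap using the monotonicity noted above) gives
\[
w=\left(\tfrac{3}{2}\sqrt{\mathcal{E}-\mathcal{E}_c}\right)^{2/3}|u-u_+|^{2/3}\bigl(1+\mathcal{O}(|u-u_+|^{2/3})\bigr),
\]
and recalling $\eta=\tfrac{c^2}{2}-w$ together with $\bigl(\tfrac{3}{2}\bigr)^{2/3}=\tfrac{3^{2/3}}{2^{2/3}}$ yields exactly (\ref{sol-expansion}). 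I expect the main obstacle to be precisely this last inversion: converting the asymptotic relation for the convergent integral into the pointwise expansion with the claimed $\mathcal{O}(|u-u_\pm|^{4/3})$ remainder requires uniform control of the error terms and verification that the map $w\mapsto u_+-u(w)$ is invertible near $w=0$, where its derivative degenerates like $w^{1/2}$. The monotonicity from the finiteness step and a careful bound on the integral remainder are what make this rigorous.
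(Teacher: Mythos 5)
Your proposal is correct and follows essentially the same route as the paper: both isolate the singular part $\tfrac{2(\mathcal{E}-\mathcal{E}_c)}{c^2-2\eta}$ of $(\eta')^2$ near $\eta=\tfrac{c^2}{2}$, integrate the separated equation to get $(c^2-2\eta)^{3/2}\sim 3\sqrt{2(\mathcal{E}-\mathcal{E}_c)}\,|u-u_\pm|$, invert to obtain (\ref{sol-expansion}), and deduce compactness of $[u_-,u_+]$ from the convergence of the same integral $\int_{-\sqrt{2\mathcal{E}}}^{c^2/2}\sqrt{(c^2-2\eta)/(2\mathcal{E}-\eta^2)}\,d\eta$. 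Your explicit substitution $s=\tfrac{c^2}{2}-\sigma$ and the remark on the degenerate derivative in the inversion are just a more detailed presentation of the same computation, and the constants match.
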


\begin{proof}
We consider the level curve $E(\eta,\eta') = \mathcal{E}$ with $\mathcal{E} \in (\mathcal{E}_c,\infty)$ for $\eta < \frac{c^2}{2}$. Then, we have 
\begin{align*}
	\left(\frac{d \eta}{du} \right)^2 &= \frac{2 \mathcal{E} - \eta^2}{c^2 - 2 \eta} \\
	&= \frac{2(\mathcal{E} - \mathcal{E}_c)}{c^2 - 2 \eta} + \frac{1}{2} c^2 + \frac{1}{4} (2 \eta - c^2) \\
	&= \frac{2(\mathcal{E} - \mathcal{E}_c)}{c^2 - 2 \eta} + \mathcal{O}(1) \quad \mbox{\rm as} \;\; \eta \to \frac{c^2}{2}.
\end{align*}
This yields 
$$
\sqrt{c^2 - 2 \eta} 	\left[ 1 + \mathcal{O}(|c^2-2\eta|) \right] \frac{d \eta}{du} = \pm \sqrt{2 (\mathcal{E} - \mathcal{E}_c)} \quad \mbox{\rm as} \;\; u \to u_{\pm} \mp 0.
$$	
Integrating in $u$ yields 
$$
\sqrt{(c^2 - 2 \eta)^3} 	\left[ 1 + \mathcal{O}(|c^2-2\eta|) \right]= \mp 3 \sqrt{2 (\mathcal{E} - \mathcal{E}_c)} (u - u_{\pm}) \quad \mbox{\rm as} \;\; u \to u_{\pm} \mp 0,
$$
which results in the expansion (\ref{sol-expansion}). 
It remains to prove that $[u_-,u_+]$ is compact. This follows from the bounds 
\begin{align*}
u_+ - u_- &= 2 \int_{-\sqrt{2 \mathcal{E}}}^{c^2/2} \frac{\sqrt{c^2 - 2 \eta}}{\sqrt{2 \mathcal{E} - \eta^2}} d \eta \\
&\leq 2 \sqrt{c^2 +2 \sqrt{2 \mathcal{E}}} \int_{-\sqrt{2 \mathcal{E}}}^{c^2/2} \frac{d\eta}{\sqrt{2 \mathcal{E} - \eta^2}} < \infty,
\end{align*}
since the integral in the upper bound is finite.
\end{proof}

In order to analyze bounded periodic solutions in Lemma \ref{lemma-2}, we introduce the following period function associated with (\ref{first-order}) and (\ref{quad}):
\begin{equation}
\label{period-function}
T(\mathcal{E},c) := 2 \int_{-\sqrt{2 \mathcal{E}}}^{\sqrt{2 \mathcal{E}}} \frac{\sqrt{c^2 - 2 \eta}}{\sqrt{2 \mathcal{E} - \eta^2}} d \eta, \qquad \mathcal{E}\in (0,\mathcal{E}_c).
\end{equation}
For the singular solutions in Lemma \ref{lemma-3}, we augment the period function for $\mathcal{E} \geq \mathcal{E}_c$ as
\begin{equation}
\label{period-function-sing}
T(\mathcal{E},c) := 2 \int_{-\sqrt{2 \mathcal{E}}}^{c^2/2} \frac{\sqrt{c^2 - 2 \eta}}{\sqrt{2 \mathcal{E} - \eta^2}} d \eta, \qquad \mathcal{E}\in [\mathcal{E}_c,\infty).
\end{equation}
The next result describes properties of the period function 
$(0,\infty) \ni \mathcal{E} \mapsto T(\mathcal{E},c)$ for every fixed $c > 0$. 

\begin{lemma}
	\label{lemma-4}
	For every $c > 0$, there exist $\mathcal{E}_*, \mathcal{E}_{**} \in (\mathcal{E}_c,\infty)$ that depend on $c$ such that 
\begin{equation}
\label{period-decreasing}
\frac{\partial}{\partial \mathcal{E}} T(\mathcal{E},c) < 0, \quad 
\mathcal{E} \in (0,\mathcal{E}_*)
\end{equation}
and
\begin{equation}
\label{period-increasing}
\frac{\partial}{\partial \mathcal{E}} T(\mathcal{E},c) > 0, \quad 
\mathcal{E} \in (\mathcal{E}_{**},\infty)
\end{equation}
with 
\begin{align*}
T(\mathcal{E},c) &\to 2 \pi c \quad \mbox{\rm as} \;\; \mathcal{E} \to 0, \\
T(\mathcal{E},c) &\to 4 \sqrt{2} c \quad \mbox{\rm as} \;\; \mathcal{E} \to \mathcal{E}_c, 
\end{align*}
and $T(\mathcal{E},c) \to \infty$ as $\mathcal{E}\to \infty$.
In addition, we have 
\begin{equation}
\label{period-speed}
\frac{\partial}{\partial c} T(\mathcal{E},c) > 0, \quad 
\mathcal{E} \in (0,\infty), \quad c \in (0,\infty).
\end{equation}
\end{lemma}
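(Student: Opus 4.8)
The plan is to reduce every case of the period function to a single trigonometric integral and then read off its monotonicity by differentiating under the integral sign. Substituting $\eta = \sqrt{2\mathcal{E}}\sin\theta$ in both (\ref{period-function}) and (\ref{period-function-sing}) and writing $a := \sqrt{2\mathcal{E}}$, I expect to obtain the unified representation
\[
T(\mathcal{E},c) = 2 \int_{-\pi/2}^{\theta_0} \sqrt{c^2 - 2 a \sin\theta}\, d\theta,
\]
where $\theta_0 = \pi/2$ in the smooth regime $a \in (0, c^2/2)$ (i.e. $\mathcal{E} \in (0,\mathcal{E}_c)$) and $\theta_0 = \arcsin\!\big(\tfrac{c^2}{2a}\big) \in (0,\pi/2)$ in the singular regime $a > c^2/2$ (i.e. $\mathcal{E} > \mathcal{E}_c$). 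The key structural point is that the boundary term produced by differentiating in $a$ vanishes in both regimes: because $\theta_0$ is constant when $a < c^2/2$, and because the integrand $\sqrt{c^2 - 2a\sin\theta}$ itself vanishes at $\theta = \theta_0$ when $a > c^2/2$. Hence in both cases
\[
\frac{\partial T}{\partial a} = -2 \int_{-\pi/2}^{\theta_0} \frac{\sin\theta}{\sqrt{c^2 - 2 a \sin\theta}}\, d\theta =: -2 J(a),
\]
and since $d\mathcal{E} = a\, da$ the sign of $\partial_\mathcal{E} T$ equals that of $\partial_a T$, so the whole problem reduces to controlling the sign of $J(a)$.

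For the decreasing regime, on $(0,\mathcal{E}_c)$ we have $\theta_0 = \pi/2$, and folding the interval $(-\pi/2,0)$ onto $(0,\pi/2)$ gives
\[
J(a) = \int_0^{\pi/2} \sin\theta \left[ \frac{1}{\sqrt{c^2 - 2a\sin\theta}} - \frac{1}{\sqrt{c^2 + 2a\sin\theta}} \right] d\theta > 0,
\]
because the bracket is positive for $a>0$; thus $\partial_\mathcal{E} T < 0$ on all of $(0,\mathcal{E}_c)$. To push the decreasing interval strictly past $\mathcal{E}_c$, I would examine the one-sided limit as $a \to (c^2/2)^+$: there $\theta_0 \to \pi/2^-$ and, in the limit, the integrand of $J$ develops the non-integrable weight $\sim (\pi/2-\theta)^{-1}$ near $\theta_0$, forcing $J(a) \to +\infty$ and hence $\partial_a T \to -\infty$. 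Since $J$ is continuous on $(c^2/2,\infty)$, this yields $\partial_\mathcal{E} T < 0$ on an interval $(\mathcal{E}_c, \mathcal{E}_*)$ with $\mathcal{E}_* > \mathcal{E}_c$; combined with continuity of $T$ at $\mathcal{E}_c$ (from the limit computation below), $T$ is strictly decreasing on $(0,\mathcal{E}_*)$.

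The genuinely delicate step is the increasing regime for large $\mathcal{E}$, which is where I expect the main work. Here $\theta_0 \to 0^+$ as $a\to\infty$, and $J(a) = J_-(a) + J_+(a)$ is the sum of a negative piece over $(-\pi/2,0)$ and a positive piece over $(0,\theta_0)$; I must show the negative piece dominates. Reflecting the negative piece gives the lower bound $|J_-(a)| = \int_0^{\pi/2} \frac{\sin\theta}{\sqrt{c^2+2a\sin\theta}}\,d\theta \geq (c^2+2a)^{-1/2}$, of order $a^{-1/2}$, whereas substituting $x=\sin\theta$ and then $u = 2ax$ in the positive piece bounds $J_+(a) \leq C a^{-2}$. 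Hence $J(a) < 0$ for all sufficiently large $a$, giving $\partial_\mathcal{E} T > 0$ and defining $\mathcal{E}_{**} \in (\mathcal{E}_c,\infty)$; balancing these two estimates is the crux of the argument.

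Finally, the boundary values and $c$-monotonicity follow by direct evaluation. As $a\to 0$ the integrand tends uniformly to $c$, giving $T \to 2\pi c$; at $a = c^2/2$ the integrand is $c\sqrt{1-\sin\theta} = \sqrt{2}\,c\,\sin(\tfrac\pi4 - \tfrac\theta2)$, whose elementary integral yields $T = 4\sqrt2\, c$; and rescaling $\eta = a s$ to write $T = 2\sqrt{2a}\int_{-1}^{c^2/(2a)} \frac{\sqrt{c^2/(2a)-s}}{\sqrt{1-s^2}}\,ds$ shows $T \sim \mathrm{const}\cdot\sqrt a \to \infty$ as $a\to\infty$. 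For (\ref{period-speed}), differentiating in $c$ under the integral (the boundary term at $\eta = c^2/2$ again vanishes because $\sqrt{c^2-2\eta}=0$ there) gives $\partial_c T = 2c \int \frac{d\eta}{\sqrt{c^2-2\eta}\sqrt{2\mathcal{E}-\eta^2}} > 0$, with the integrand positive and integrable throughout the range.
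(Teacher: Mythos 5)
Your proposal is correct and reaches all the assertions of the lemma, but it organizes the computation differently from the paper. The paper keeps the linear rescaling $\eta = \sqrt{2\mathcal{E}}\,x$ and, in the singular regime, splits (\ref{period-function-sing}) into a piece over $\eta \in (-\sqrt{2\mathcal{E}},-c^2/2)$ (rescaled, with the fixed integrable singularity at $x=-1$ and a \emph{regular} moving endpoint) plus a piece over $(-c^2/2,c^2/2)$ whose integrand is entirely regular; differentiation under the integral sign is then immediate, and the signs of the two resulting terms are read off separately. Your substitution $\eta = \sqrt{2\mathcal{E}}\sin\theta$ absorbs the turning-point singularity into the measure and produces one formula valid in both regimes, with the vanishing of $\sqrt{c^2-2a\sin\theta}$ at $\theta_0$ killing the boundary term; the antisymmetrization argument for (\ref{period-decreasing}) on $(0,\mathcal{E}_c)$ is then the same as the paper's. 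What your route buys is a genuinely sharper treatment of the two delicate points: you extend the decreasing interval past $\mathcal{E}_c$ by showing $J(a)\to+\infty$ as $a\to(c^2/2)^+$ together with continuity of $J$ on $(c^2/2,\infty)$, and you prove (\ref{period-increasing}) by the explicit quantitative comparison $|J_-|\gtrsim a^{-1/2}$ versus $J_+\lesssim a^{-2}$, whereas the paper argues these two points more qualitatively from the behaviour of the two terms in its decomposition and from $T(\mathcal{E},c)=\mathcal{O}(\mathcal{E}^{1/4})\to\infty$. The one place where your route costs something is the justification of differentiating under the integral sign in the singular regime: in your unified formula the $a$-derivative of the integrand blows up like $(\theta_0-\theta)^{-1/2}$ at the \emph{moving} endpoint, so the Leibniz rule needs a short limiting argument (truncate at $\theta_0-\epsilon$ and use uniform integrability), a complication the paper's two-piece decomposition is designed to avoid; since the boundary term vanishes and the limiting integral converges locally uniformly in $a$, this is routine to repair. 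The boundary values $2\pi c$, $4\sqrt{2}c$, the divergence as $\mathcal{E}\to\infty$, and the monotonicity (\ref{period-speed}) are obtained essentially as in the paper.
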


\begin{proof}
	For the smooth periodic solutions, we use the change of variables $\eta = \sqrt{2 \mathcal{E}} x$ in (\ref{period-function}) and obtain 
	$$
T(\mathcal{E},c) = 2 \int_{-1}^{1} \frac{\sqrt{c^2 - 2 \sqrt{2 \mathcal{E}} x}}{\sqrt{1 - x^2}} d x, \qquad \mathcal{E}\in (0,\mathcal{E}_c).
$$	
Since the weak singularity is independent of $\mathcal{E}$, we can differentiate under the integration sign and obtain 
\begin{equation}
\label{tech-eq}
\frac{\partial}{\partial \mathcal{E}} T(\mathcal{E},c) = -\frac{2}{\sqrt{2 \mathcal{E}}} \int_{-1}^{1} \frac{x dx}{\sqrt{1 - x^2} \sqrt{c^2 - 2 \sqrt{2 \mathcal{E}} x}}, \qquad \mathcal{E}\in (0,\mathcal{E}_c).
\end{equation}
The result is strictly negative since 
$$
\frac{|x|}{\sqrt{1 - x^2} \sqrt{c^2 + 2 \sqrt{2 \mathcal{E}} |x|}} < 
\frac{x}{\sqrt{1 - x^2} \sqrt{c^2 - 2 \sqrt{2 \mathcal{E}} x}}, \quad x \in (0,1).
$$
This proves (\ref{period-decreasing}) for $\mathcal{E}\in (0,\mathcal{E}_c)$. 
We also obtain from the same representation
$$
T(\mathcal{E},c) \to 2c \int_{-1}^1 \frac{dx}{\sqrt{1-x^2}} = 2 \pi c \quad \mbox{\rm as} \;\; \mathcal{E} \to 0 
$$
and
$$
T(\mathcal{E},c) \to 2c \int_{-1}^1 \frac{\sqrt{1-x}}{\sqrt{1-x^2}} dx = 4 \sqrt{2} c \quad \mbox{\rm as} \;\; \mathcal{E} \to \mathcal{E}_c. 
$$
Monotonicity (\ref{period-speed}) follows from the positive derivative of (\ref{period-function}) in $c$.

For the singular solutions, we break (\ref{period-function-sing}) into the sum of two terms and use the same change of variables only in the first term:
$$
T(\mathcal{E},c) = 2 \int_{-1}^{-c^2/2\sqrt{2 \mathcal{E}}} \frac{\sqrt{c^2 - 2 \sqrt{2 \mathcal{E}} x}}{\sqrt{1 - x^2}} d x +  2 \int_{-c^2/2}^{c^2/2} \frac{\sqrt{c^2 - 2 \eta}}{\sqrt{2 \mathcal{E} - \eta^2}} d \eta,\qquad \mathcal{E}\in [\mathcal{E}_c,\infty).
$$	
Since $8 \mathcal{E} > c^4$, both terms are differentiable under the integration sign and we obtain 
$$
\frac{\partial}{\partial \mathcal{E}} T(\mathcal{E},c) = -\frac{2}{\sqrt{2 \mathcal{E}}} \int_{-1}^{-c^2/2\sqrt{2 \mathcal{E}}} \frac{x dx}{\sqrt{1 - x^2} \sqrt{c^2 - 2 \sqrt{2 \mathcal{E}} x}} -  2 \int_{-c^2/2}^{c^2/2} \frac{\sqrt{c^2 - 2 \eta}}{\sqrt{(2 \mathcal{E} - \eta^2)^3}} d \eta, 
$$
where the first term is positive and the second term is negative. The first positive term is zero as $\mathcal{E} \to \mathcal{E}_c$, monotonically increasing for $\mathcal{E} \gtrsim \mathcal{E}_c$, and monotonically decreasing as $\mathcal{E} \to \infty$. The second negative term is strictly negative as $\mathcal{E} \to \mathcal{E}_c$ and is monotonically increasing towards $0$ for $\mathcal{E} > \mathcal{E}_c$. Since 
$$
\int_{-1}^{-c^2/2\sqrt{2 \mathcal{E}}} \frac{\sqrt{c^2 - 2 \sqrt{2 \mathcal{E}} x}}{\sqrt{1 - x^2}} d x \sim  
(8 \mathcal{E})^{1/4} \int_{-1}^0 \frac{\sqrt{|x|}dx}{\sqrt{1-x^2}}
\quad \mbox{\rm as} \;\; \mathcal{E}\to \infty
$$
and
$$
\int_{-c^2/2}^{c^2/2} \frac{\sqrt{c^2 - 2 \eta}}{\sqrt{2 \mathcal{E} - \eta^2}} d \eta \sim  
(2 \mathcal{E})^{-1/2} \int_{-c^2/2}^{c^2/2} \sqrt{c^2 - 2 \eta} d \eta
\quad \mbox{\rm as} \;\; \mathcal{E}\to \infty
$$
the first term in the decomposition of $T(\mathcal{E},c)$ is larger than the second term at infinity and we have $T(\mathcal{E},c) = \mathcal{O}(\mathcal{E}^{1/4})$ as $\mathcal{E} \to \infty$ so that 
\begin{align*}
T(\mathcal{E},c) &\to \infty \quad \mbox{\rm as} \;\;\mathcal{E}\to \infty.
\end{align*}
At the same time, the first term in the decomposition of $T(\mathcal{E},c)$ is zero at $\mathcal{E} = \mathcal{E}_c$, hence there exist $\mathcal{E}_*,\mathcal{E}_{**} \in (\mathcal{E}_c,\infty)$ such that (\ref{period-decreasing}) and (\ref{period-increasing}) hold.
Monotonicity (\ref{period-speed})  follows from the positive derivative of (\ref{period-function-sing}) in $c$.
\end{proof}

We are now ready to prove Theorem \ref{theorem-1}.

\begin{proof}[Proof of Theorem \ref{theorem-1}]
	We consider the family of solutions of Lemmas \ref{lemma-2} and \ref{lemma-3} for $\mathcal{E}\in (0,\infty)$ and $\eta < \frac{c^2}{2}$. For every $c > 0$, we select the intersection of the period function $T(\mathcal{E},c)$ of Lemmas \ref{lemma-4}  with the $2\pi$ period on $\mathbb{T}$. It follows from (\ref{period-speed}) that the period function is monotonically increasing in $c$ for every $\mathcal{E} \in (0,\infty)$.
	
	For the smooth periodic solutions with the period function (\ref{period-function}), there exists only one root $\mathcal{E} \in (0,\mathcal{E}_c)$ of  $T(\mathcal{E},c) = 2\pi$ for every $c \in (1,c_*)$ 
	with $c_* = \frac{\pi}{2 \sqrt{2}}$ due to 
	monotonicity (\ref{period-decreasing}) and the limiting values 
	$T(0,c) = 2 \pi c$ and $T(\mathcal{E}_c,c) = 4 \sqrt{2} c$. This gives the first assertion of the theorem with the limit (\ref{small-amplitude}) since the smooth periodic solution shrinks to the center point $(0,0)$ on the $(\eta,\eta')$ plane as $\mathcal{E}\to 0$. 
	At $c = c_*$, we have $\mathcal{E} = \mathcal{E}_{c_*}$ for the root of $T(\mathcal{E},c) = 2\pi$. The unique single-lobe solution (\ref{quadratic}) follows from the unique solution (\ref{parabola}) at $\mathcal{E} = \mathcal{E}_c$ from Lemma \ref{lemma-2} by the translation $u_0 = \pi$ for $u \in [0,\pi]$ and an even reflection on $[-\pi,0]$.
	
	For the singular solutions with the period function (\ref{period-function-sing}), we have a root $\mathcal{E} \in (\mathcal{E}_c,\infty)$ of $T(\mathcal{E},c) = 2\pi$ for every $c \in (c_*,c_{\infty})$ due to monotonicity (\ref{period-decreasing}).
	The value $c_{\infty}$ is obtained from the intersection of 
	$T(\mathcal{E}_*,c)$ with $2 \pi$. The asymptotic expansion (\ref{2-3-singularity}) of the singular single-lobe solutions 
	follows from the expansion (\ref{sol-expansion}) with the translation $u_- = 0$ of the solution in Lemma \ref{lemma-3} for $u \in [0,\pi]$ with $\eta(\pi) = -\sqrt{2 \mathcal{E}}$ and $\eta'(\pi) = 0$ and an even reflection on $[-\pi,0]$.
\end{proof}

Figure \ref{fig-2} illustrates the result of Lemma \ref{lemma-4} and the proof of Theorem \ref{theorem-1}. The period functions (\ref{period-function}) and (\ref{period-function-sing}) can be computed in terms of complete elliptic integrals by using 3.141 (integrals 2 and 9) in \cite{GR}:
$$
T(\mathcal{E},c) = 4 \sqrt{c^2 + 2 \sqrt{2\mathcal{E}}} E\left(\sqrt{\frac{4 \sqrt{2\mathcal{E}}}{c^2 + 2 \sqrt{2\mathcal{E}}}}\right), \quad \mathcal{E} \in (0,\mathcal{E}_c)
$$ 
and
\begin{align*}
T(\mathcal{E},c) &= 4 (2\mathcal{E})^{1/4} \left[ 2 E\left(\sqrt{\frac{c^2 + 2 \sqrt{2\mathcal{E}}}{4 \sqrt{2\mathcal{E}}}} \right) \right. \\
& \qquad \left.
+ \left( \frac{c^2}{2 \sqrt{2 \mathcal{E}}} - 1 \right) K\left(\sqrt{\frac{c^2 + 2 \sqrt{2\mathcal{E}}}{4 \sqrt{2\mathcal{E}}}} \right) \right], \quad \mathcal{E} \in (\mathcal{E}_c,\infty),
\end{align*}
where $K(k)$ and $E(k)$ are complete elliptic integrals of the first and second kind, respectively. The two definitions agree at $T(\mathcal{E}_c,c) = 4 \sqrt{2} c$, where $\mathcal{E}_c = \frac{c^4}{8}$ shown by the black dot in Figure \ref{fig-2}. We can also see from Figure \ref{fig-2} that 
the monotonicity results (\ref{period-decreasing}), (\ref{period-increasing}), and (\ref{period-speed}) hold true with $\mathcal{E}_* = \mathcal{E}_{**}$. The periodic solutions of Theorem \ref{theorem-1} on $\mathbb{T}$ are obtained by the intersection of the plot of the period function with the level $T(\mathcal{E},c) = 2\pi$.

\begin{figure}[htb!]
	\centering
	\includegraphics[width=12cm,height=10cm]{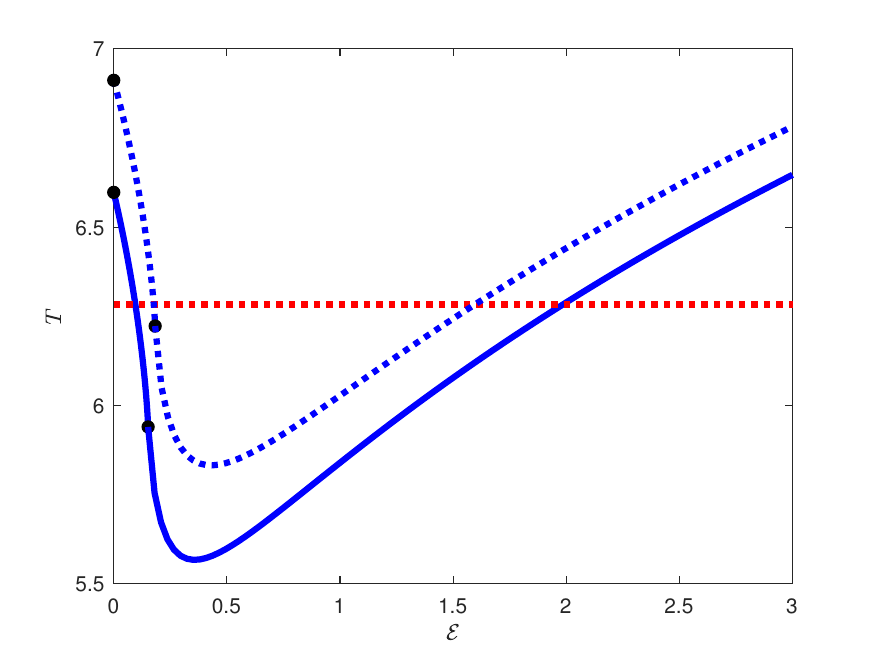}
	\caption{Period function $T$ versus $\mathcal{E}$ for $c = 1.05$ (solid blue) and $c = 1.1$ (dashed blue). Black dots show the values $T(0,c) = 2\pi c$ and 
		$T(\mathcal{E}_c,c) = 4 \sqrt{2}c$. The red dashed line gives the level 
		$T(\mathcal{E},c) = 2\pi$ for periodic solutions on $\mathbb{T}$.}
	\label{fig-2}
\end{figure}

\section{Stability of smooth traveling periodic waves}
\label{sec-3}

Here we consider the unique single-lobe solution with the profile $\eta \in C^{\infty}_{\rm per}(\mathbb{T})$ in Theorem \ref{theorem-1} which exists in the second-order equation (\ref{ode-model}) for $c \in (1,c_*)$ with $c_* = \frac{\pi}{2 \sqrt{2}}$. Theorem \ref{theorem-2} is proven by showing that the periodic solution is a constrained minimizer of the quadratic form 
$\langle \mathcal{L}\hat{\eta}, \hat{\eta} \rangle$ associated with the linear operator 
\begin{equation}
\label{lin-operator}
\mathcal{L} = -\partial_u (c^2 - 2 \eta) \partial_u - 1 + 2 \eta''
\end{equation}
in the function space $H^1_{\rm per}(\mathbb{T}) \cap \mathcal{X}_c$, where 
\begin{equation}
\label{function-space}
\mathcal{X}_c := \left\{ \hat{\eta} \in L^2(\mathbb{T}) : \quad 
\langle 1, \hat{\eta} \rangle = \langle \eta'', \hat{\eta} \rangle = 0 \right\}.
\end{equation}
The minimizer is only degenerate due to the translational symmetry which results 
in the one-dimensional kernel ${\rm Ker}(\mathcal{L}) = {\rm span}(\eta')$ with $\eta' \in \mathcal{X}_c$. 

We start with the count of the negative eigenvalues of $\mathcal{L} : H^2_{\rm per}(\mathbb{T}) \subset L^2(\mathbb{T}) \to L^2(\mathbb{T})$ in the following lemma.

\begin{lemma}
\label{lem-stab-1}
Let $\eta \in C^{\infty}_{\rm per}(\mathbb{T})$ be the profile of the single-lobe solution in Theorem \ref{theorem-1} for $c \in (1,c_*)$ and $\mathcal{L} :  H^2_{\rm per}(\mathbb{T}) \subset L^2(\mathbb{T}) \to L^2(\mathbb{T})$ be the linear operator given by (\ref{lin-operator}). Then, $\mathcal{L}$ has two simple negative eigenvalues and a simple zero eigenvalue, with the rest of its spectrum bounded away from zero. 
\end{lemma}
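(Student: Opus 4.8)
The plan is to treat $\mathcal{L}$ as a self-adjoint Sturm--Liouville operator and to reduce the eigenvalue count to two scalar boundary-value problems by exploiting the reflection symmetry of the single-lobe profile. Since $\eta\in C^\infty_{\rm per}(\mathbb{T})$ is even with $\max\eta=\eta(0)=\sqrt{2\mathcal{E}}<\tfrac{c^2}{2}$ by Lemma \ref{lemma-2}, the leading coefficient $c^2-2\eta$ is strictly positive on $\mathbb{T}$, so $\mathcal{L}$ is uniformly elliptic with compact resolvent and purely discrete, real, bounded-below spectrum. Because $\eta$ is even, $\mathcal{L}$ commutes with the reflection $R:u\mapsto-u$, and $L^2(\mathbb{T})$ splits into even and odd subspaces: even eigenfunctions restrict to Neumann eigenfunctions of $\mathcal{L}$ on $[0,\pi]$ and odd eigenfunctions to Dirichlet eigenfunctions on $[0,\pi]$. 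Hence the spectrum of $\mathcal{L}$ on $\mathbb{T}$ is the disjoint union (with multiplicity) of the Neumann spectrum $\mu_0^N<\mu_1^N<\cdots$ and the Dirichlet spectrum $\mu_0^D<\mu_1^D<\cdots$ on $[0,\pi]$, and the whole count reduces to locating $0$ within these two ladders.

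I would first pin down the zero eigenvalue in the odd sector. Differentiating (\ref{ode-model}) in $u$ and substituting back shows $\mathcal{L}\eta'=0$. The function $\eta'$ is odd, vanishes at $u=0$ and $u=\pi$, and is single-signed on $(0,\pi)$ because $\eta$ is strictly monotone there (single lobe); having no interior zero, $\eta'$ is the Dirichlet ground state, so $\mu_0^D=0$ while $\mu_n^D>0$ for $n\ge1$. Thus the odd sector contributes exactly the simple zero eigenvalue and no negative eigenvalue, and the two negative eigenvalues must come from the even (Neumann) sector.

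The crux is therefore the even sector. The ground state $\mu_0^N<0$ is immediate from the even test function $\hat\eta\equiv1$, since $\langle\mathcal{L}1,1\rangle=\oint(-1+2\eta'')\,du=-2\pi<0$. To reach the second negative eigenvalue I would use the even solution $w:=\partial_{\mathcal{E}}\eta$ of $\mathcal{L}w=0$, obtained by differentiating (\ref{ode-model}) in $\mathcal{E}$; by construction $w$ is even with $w'(0)=0$. As $w$ and $\eta'$ are linearly independent solutions of $\mathcal{L}\psi=0$, Sturm separation forces $w$ to have exactly one zero in $(0,\pi)$ (between the consecutive zeros $0,\pi$ of $\eta'$). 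Differentiating the first integral $E(\eta,\eta')=\mathcal{E}$ of Lemma \ref{lemma-1} at the turning points $\eta'=0$ gives $\eta\,w=1$ there, so $w(0)=1/\eta(0)>0$ and $w(\pi)=1/\eta(\pi)<0$. Finally, writing the half-period as $L(\mathcal{E})=\tfrac12T(\mathcal{E})$ and differentiating the relation $\eta'(L(\mathcal{E});\mathcal{E})=0$ yields the boundary value $w'(\pi)=-\tfrac12\eta''(\pi)\,T'(\mathcal{E})$; since $\eta''(\pi)>0$ at the minimum and $T'(\mathcal{E})<0$ on the smooth branch by Lemma \ref{lemma-4}, one gets $w'(\pi)>0$.

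With these data a Prüfer oscillation count finishes the argument. Writing the Neumann solution at spectral parameter $\lambda$ in Prüfer form, the boundary angle $\theta(\pi;\lambda)$ is strictly increasing in $\lambda$ and equals $\tfrac{\pi}{2},\tfrac{3\pi}{2},\tfrac{5\pi}{2}$ at $\mu_0^N,\mu_1^N,\mu_2^N$ respectively. Tracking $\theta$ for $w$ (the solution at $\lambda=0$) through its single interior zero, with $w(\pi)<0$ and $w'(\pi)>0$, places $\theta_w(\pi)\in(\tfrac{3\pi}{2},2\pi)$; hence $0>\mu_1^N$ and $0<\mu_2^N$. Combining sectors, $\mu_0^N<\mu_1^N<0=\mu_0^D<\min\{\mu_2^N,\mu_1^D\}$, giving two negative eigenvalues, a zero eigenvalue, and the remainder bounded away from $0$ by the spectral gap; all three are simple on $\mathbb{T}$ because each is simple within its sector and the even and odd values do not coincide. \textbf{The main obstacle is precisely this sign determination for $\mu_1^N$}: the elementary Dirichlet--Neumann bracketing $\mu_n^N\le\mu_n^D$ does not order $\mu_1^N$ against $\mu_0^D$, so one genuinely needs the period-function monotonicity $T'(\mathcal{E})<0$—entering through the boundary value $w'(\pi)>0$—to push the second Neumann eigenvalue below zero and thereby produce the second negative direction.
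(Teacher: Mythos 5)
Your proof is correct, but it takes a genuinely different route from the paper. The paper works directly with the periodic spectral problem on $\mathbb{T}$: it exhibits the two solutions $\eta'$ and $\partial_{\mathcal{E}}\eta$ of $\mathcal{L}\psi=0$, shows that $\partial_{\mathcal{E}}\eta$ fails to be $2\pi$-periodic precisely because $\partial_{\mathcal{E}}T(\mathcal{E}(c),c)\neq 0$ (so the kernel is one-dimensional), and then invokes Proposition 1 of \cite{GMNP}: writing $\eta_1(u+2\pi)=\eta_1(u)+\theta\,\eta_2(u)$ for the normalized solutions, the sign $\theta<0$ — again a direct consequence of $\partial_{\mathcal{E}}T<0$ from Lemma \ref{lemma-4} — forces $0$ to be the third periodic eigenvalue, with two simple negative eigenvalues below it. You instead exploit the evenness of the profile to split $L^2(\mathbb{T})$ into Dirichlet and Neumann problems on the half-period $[0,\pi]$, identify $\eta'$ as the Dirichlet ground state (placing the zero eigenvalue and showing the odd sector contributes nothing negative), and then run a Pr\"ufer oscillation count on the even solution $w=\partial_{\mathcal{E}}\eta$, whose boundary data $w(\pi)=1/\eta(\pi)<0$ and $w'(\pi)=-\tfrac12\eta''(\pi)\,\partial_{\mathcal{E}}T>0$ pin $\mu_1^N<0<\mu_2^N$. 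Your computations check out: the relation $\eta w=1$ at turning points follows from differentiating the first integral of Lemma \ref{lemma-1} in $\mathcal{E}$, the sign $\eta''(\pi)>0$ follows from (\ref{ode-model}) at the minimum, and the single interior zero of $w$ follows from Sturm separation against $\eta'$. Both arguments ultimately rest on the same key input, the monotonicity $\partial_{\mathcal{E}}T<0$, which you correctly identify as the crux; what your version buys is self-containedness (no appeal to the external proposition, and an explicit localization of each eigenvalue in its symmetry sector), at the cost of a longer argument, while the paper's monodromy-based count is shorter and transfers verbatim to the non-even setting where the Dirichlet--Neumann splitting is unavailable.
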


\begin{proof}
	Since $\mathcal{L} :  H^2_{\rm per}(\mathbb{T}) \subset L^2(\mathbb{T}) \to L^2(\mathbb{T})$ is a self-adjoint Sturm--Liouville operator with $\eta'' \in L^{\infty}(\mathbb{T})$ and $c^2 - 2 \eta(u) > 0$ for every $u \in \mathbb{T}$, its spectrum consists of isolated eigenvalues located on the real line. 
	
For fixed $c \in (1,c_*)$, differentiating (\ref{ode-model}) in $u$ yields 
$\mathcal{L} \eta' = 0$ with $\eta' \in H^2_{\rm per}(\mathbb{T})$. Hence, $\mathcal{L}$ admits a zero eigenvalue with the spatially odd eigenfunction $\eta'$. 
To consider the second linearly independent solution of $\mathcal{L} \hat{\eta} = 0$, we define the family $\{ \eta(u;\mathcal{E})\}_{\mathcal{E}\in (0,\mathcal{E}_c)}$ of spatially even, smooth periodic solutions of the second-order equation (\ref{ode-model}) with 
$$
\eta(0;\mathcal{E}) = \sqrt{2 \mathcal{E}}, \quad 
\partial_u \eta(0;\mathcal{E}) = 0
$$
and 
$$
\eta(T(\mathcal{E},c);\mathcal{E}) = \sqrt{2 \mathcal{E}}, \quad 
\partial_u \eta(T(\mathcal{E},c);\mathcal{E}) = 0,
$$
where $T(\mathcal{E},c)$ is the period function (\ref{period-function}) satisfying (\ref{period-decreasing}). Let $\mathcal{E}(c)$ be the root of the period function $T(\mathcal{E},c) = 2\pi$ in the proof of Theorem \ref{theorem-1} 
such that $\eta(u) = \eta(u;\mathcal{E}(c)) \in C^{\infty}_{\rm per}(\mathbb{T})$. Differentiating (\ref{ode-model}) in $\mathcal{E}$ along the family 
$\{ \eta(u;\mathcal{E})\}_{\mathcal{E}\in (0,\mathcal{E}_c)}$ 
and setting $\mathcal{E}= \mathcal{E}(c)$ yields the second linearly independent solution of $\mathcal{L} \hat{\eta} = 0$. 
The solution is given by the spatially even function $\partial_{\mathcal{E}} \eta(\cdot;\mathcal{E}(c))$ which satisfies
$$
\partial_{\mathcal{E}} \eta(0;\mathcal{E}) = \frac{1}{\sqrt{2 \mathcal{E}}}, \quad 
\partial_u \partial_{\mathcal{E}} \eta(0;\mathcal{E}) = 0
$$
and 
$$
\partial_{\mathcal{E}} \eta(T(\mathcal{E},c);\mathcal{E}) = \frac{1}{\sqrt{2 \mathcal{E}}}, \quad 
\partial_u \partial_{\mathcal{E}} \eta(T(\mathcal{E},c);\mathcal{E}) =  \partial_{\mathcal{E}} T(\mathcal{E},c) \frac{\sqrt{2 \mathcal{E}}}{c^2 - 2 \sqrt{2 \mathcal{E}}},
$$
where we have used (\ref{ode-model}) at $u = T(\mathcal{E},c)$ which implies 
$$
\partial^2_u \eta(T(\mathcal{E},c);\mathcal{E}) = -\frac{\sqrt{2 \mathcal{E}}}{c^2 - 2 \sqrt{2 \mathcal{E}}}.
$$
We thus have $\partial_{\mathcal{E}} \eta(\cdot;\mathcal{E}(c)) \in H^2_{\rm per}(\mathbb{T})$ if and only if $\partial_{\mathcal{E}} T(\mathcal{E}(c),c) = 0$, 
which is impossible due to monotonicity (\ref{period-decreasing}). Hence, $0$ is a simple eigenvalue of $\mathcal{L}$ bounded away from 
the rest of its spectrum in $L^2(\mathbb{T})$. 

To prove that there exist two negative eigenvalues of $\mathcal{L}$ below the zero eigenvalue, we use Proposition 1 in \cite{GMNP} and construct the following  two normalized solutions 
$$
\eta_1(u) := \sqrt{2\mathcal{E}} \partial_{\mathcal{E}} \eta(u;\mathcal{E}) \quad \mbox{\rm and} \quad 
\eta_2(u) := \frac{\eta'(u)}{\eta''(0)},
$$
where $\eta_2(u+2\pi) = \eta_2(u)$ and $\eta_1(u+2\pi) = \eta_1(u) + \theta \eta_2(u)$ with 
$$
\theta := \partial_{\mathcal{E}} T(\mathcal{E}(c),c) \frac{\sqrt{2 \mathcal{E}(c)}}{c^2 - 2 \sqrt{2 \mathcal{E}(c)}} < 0,
$$
due to monotonicity (\ref{period-decreasing}). By Proposition 1 in \cite{GMNP}, 
$0$ is the third eigenvalue of $\mathcal{L}$ with two simple negative eigenvalues below $0$.
\end{proof}

\begin{remark}
	One can prove the assertion of lemma \ref{lem-stab-1} by using small-amplitude expansions. The periodic solution with the profile $\eta \in C^{\infty}_{\rm per}(\mathbb{T})$ is expanded near the trivial solution, see (\ref{small-amplitude}), as 
	$$
	\eta = a \cos(u) - a^2 \sin^2(u) + \mathcal{O}(a^3), \quad c^2 = 1 + \frac{1}{2} a^2 + \mathcal{O}(a^4),
	$$
	where $a > 0$ is a small parameter. Then, $\mathcal{L}$ along the solution family has one negative eigenvalue $-1 + \mathcal{O}(a^2)$ and a small negative eigenvalue $-a^2 + \mathcal{O}(a^4)$ with $0$ being the third eigenvalue. Since ${\rm Ker}(\mathcal{L}) = {\rm span}(\eta')$ along the family of smooth periodic solutions for $c \in (1,c_*)$, the inertia index of $\mathcal{L}$ remains the same for 
	every $c \in (1,c_*)$.
\end{remark}

The next lemma specifies the criterion for the constrained linear operator 
$\mathcal{L}|_{\mathcal{X}_c}$ to be positive, where $\mathcal{L}|_{\mathcal{X}_c} = \mathcal{L} |_{\{ 1, \eta''\}^{\perp}}$ is defined by the two constraints in (\ref{function-space}).  

\begin{lemma}
	\label{lem-stab-2}
	Let $\mathcal{L} :  H^2_{\rm per}(\mathbb{T}) \subset L^2(\mathbb{T}) \to L^2(\mathbb{T})$ be given by (\ref{lin-operator}) as in Lemma \ref{lem-stab-1} and $\mathcal{X}_c$ be the constrained subspace of $L^2(\mathbb{T})$ given by (\ref{function-space}). Then, $\mathcal{L}|_{\mathcal{X}_c}$ has a  simple zero eigenvalue and no negative eigenvalues, with the rest of its spectrum being bounded away from zero, if and only if the mapping 
\begin{equation}
\label{mass-monotonicity}
	(1,c_*) \ni c \mapsto \mathcal{M}(c) := \oint \eta du 
\end{equation}
	is monotonically decreasing for $c \in (1,c_*)$.
\end{lemma}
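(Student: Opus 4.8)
The plan is to reduce the statement to the classical theory of self-adjoint operators restricted to a subspace of finite codimension. By Lemma \ref{lem-stab-1} the unconstrained operator $\mathcal{L}$ has exactly two simple negative eigenvalues and a simple zero eigenvalue with ${\rm Ker}(\mathcal{L}) = {\rm span}(\eta')$, so that $n(\mathcal{L}) = 2$ and $z(\mathcal{L}) = 1$, where $n(\cdot)$ and $z(\cdot)$ denote the number of negative and zero eigenvalues. Since the profile family is even, both constraint vectors $1$ and $\eta''$ are even while $\eta'$ is odd; hence $\langle 1, \eta' \rangle = \langle \eta'', \eta' \rangle = 0$, the two constraints are orthogonal to ${\rm Ker}(\mathcal{L})$, and $\mathcal{L}^{-1}$ is well defined on each of them within the even subspace. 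The spectral type of $\mathcal{L}|_{\mathcal{X}_c}$ is then governed by the symmetric matrix $D$ with entries $D_{ij} = \langle v_i, \mathcal{L}^{-1} v_j \rangle$, $v_1 = 1$, $v_2 = \eta''$, through the constrained counting identities
\[
n(\mathcal{L}|_{\mathcal{X}_c}) = n(\mathcal{L}) - n(D) - z(D), \qquad z(\mathcal{L}|_{\mathcal{X}_c}) = z(\mathcal{L}) + z(D),
\]
which apply because the Sturm--Liouville operator $\mathcal{L}$ has discrete spectrum bounded away from $0$ apart from the listed eigenvalues. Since $\eta' \in \mathcal{X}_c$ remains in the kernel of the restriction, the required conclusion ``$\mathcal{L}|_{\mathcal{X}_c}$ has a simple zero eigenvalue, no negative eigenvalues, and the rest of its spectrum bounded away from $0$'' is equivalent to $n(D) = 2$ and $z(D) = 0$, i.e. to $D$ being negative definite.

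The next step is to compute $D$ explicitly. First I would establish smoothness of the map $c \mapsto \eta(\cdot;c)$ by the implicit function theorem applied to $T(\mathcal{E},c) = 2\pi$, using $\partial_{\mathcal{E}} T \neq 0$ from (\ref{period-decreasing}); this produces an even, periodic derivative $\eta_c := \partial_c \eta$. Differentiating (\ref{ode-model}) in $c$ gives $\mathcal{L} \eta_c = 2 c\, \eta''$, while applying $\mathcal{L}$ to the constant function yields $\mathcal{L}[1] = 2 \eta'' - 1$ directly; together these give, on the even subspace, $\mathcal{L}^{-1} \eta'' = \frac{1}{2c} \eta_c$ and $\mathcal{L}^{-1} 1 = \frac{1}{c} \eta_c - 1$. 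Using the zero-mean constraint (\ref{zero-mean-toy}) in the form $\mathcal{M}(c) + Q(c) = 0$ along the family, together with integration by parts to obtain $\langle \eta'', \eta_c \rangle = -\frac{1}{2} Q'(c) = \frac{1}{2} \mathcal{M}'(c)$, the inner products assemble into
\[
D = \begin{pmatrix} \dfrac{\mathcal{M}'(c)}{c} - 2\pi & \dfrac{\mathcal{M}'(c)}{2c} \\ \dfrac{\mathcal{M}'(c)}{2c} & \dfrac{\mathcal{M}'(c)}{4c} \end{pmatrix}, \qquad \det D = -\frac{\pi\, \mathcal{M}'(c)}{2c}.
\]

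Finally I would read off the equivalence. A symmetric $2\times 2$ matrix is negative definite precisely when its determinant is positive and its trace is negative. Since $\det D = -\frac{\pi}{2c}\mathcal{M}'(c)$ with $c > 0$, the determinant is positive exactly when $\mathcal{M}'(c) < 0$; in that case both diagonal entries are negative, so the trace is negative, $D$ is negative definite, $n(D) = 2$, $z(D) = 0$, and therefore $n(\mathcal{L}|_{\mathcal{X}_c}) = 0$, $z(\mathcal{L}|_{\mathcal{X}_c}) = 1$. Conversely, $\mathcal{M}'(c) > 0$ forces $\det D < 0$, so $n(D) = 1$ and one negative eigenvalue survives in $\mathcal{L}|_{\mathcal{X}_c}$, while $\mathcal{M}'(c) = 0$ forces $z(D) = 1$ and a non-simple kernel; both cases violate the desired conclusion. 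This proves the stated equivalence at each fixed $c$, hence the monotone decrease of $\mathcal{M}$ on $(1,c_*)$ is the exact criterion. I expect the principal obstacle to lie not in the algebra, which closes cleanly thanks to the identities $\mathcal{L}\eta_c = 2c\,\eta''$, $\mathcal{L}[1] = 2\eta'' - 1$, and $\mathcal{M} + Q = 0$, but in the careful invocation of the constrained counting theorem in the periodic Sturm--Liouville setting: verifying discreteness of the spectrum, the orthogonality of the constraints to ${\rm Ker}(\mathcal{L})$, and the correct sign convention in the index identities.
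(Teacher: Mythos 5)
Your proposal is correct and follows essentially the same route as the paper: the constrained eigenvalue count via the $2\times 2$ matrix $\langle \mathcal{L}^{-1}v_i, v_j\rangle$ (Proposition 2 of \cite{GMNP}), the identities $\mathcal{L}\partial_c\eta = 2c\,\eta''$ and $\mathcal{L}1 = 2\eta''-1$, and the determinant $-\frac{\pi}{2c}\mathcal{M}'(c)$ with the same trichotomy. The only cosmetic difference is that you evaluate $\langle \eta'',\partial_c\eta\rangle$ directly through $Q'(c) = -\mathcal{M}'(c)$, whereas the paper reads the same relation off the symmetry of the matrix.
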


\begin{proof}
	By Proposition 2 in \cite{GMNP}, we construct the $2$-by-$2$ matrix related to the two constraints in $\mathcal{X}_c$,
	$$
	A := \left[ \begin{matrix} \langle \mathcal{L}^{-1} 1, 1 \rangle & 
	\langle \mathcal{L}^{-1} 1, \eta'' \rangle \\
	\langle \mathcal{L}^{-1} \eta'', 1 \rangle & 
	\langle \mathcal{L}^{-1} \eta'', \eta'' \rangle
	\end{matrix}
	\right].
	$$
	The inverse operator $\mathcal{L}^{-1}$ on ${\rm span}(1,\eta'')$ is well-defined since 
	$$
	{\rm Ker}(\mathcal{L}) = {\rm span}(\eta') \perp {\rm span}(1,\eta'').
	$$
Differentiating (\ref{ode-model}) in $c$ yields 
\begin{equation}
\label{range-1}
\mathcal{L} \partial_c \eta = 2 c \eta'',
\end{equation}
where $\partial_c \eta$ is defined along the family of $2\pi$-periodic solutions $\{ \eta \}_{c \in (1,c_*)}$. The family is smooth in $c$ since 
$T(\mathcal{E},c)$ is $C^1$ on $(0,\mathcal{E}_c) \times (0,c_*)$ 
and $\mathcal{E}(c)$ is $C^1$ on $(0,c_*)$ due to the implicit function theorem for $T(\mathcal{E}(c),c) = 2\pi$ with $\partial_{\mathcal{E}} T(\mathcal{E}(c),c) < 0$. In addition, we have $\mathcal{L} 1 = 2 \eta'' - 1$ so that 
\begin{equation}
\label{range-2}
\mathcal{L} \left( c^{-1} \partial_c \eta - 1 \right) = 1.
\end{equation}
By using (\ref{range-1}) and (\ref{range-2}), we compute 
$$
A = \left[ \begin{matrix} c^{-1} \langle \partial_c \eta, 1 \rangle - 2\pi & 
c^{-1} \langle \partial_c \eta, \eta'' \rangle \\
(2c)^{-1} \langle \partial_c \eta, 1 \rangle & (2c)^{-1}
\langle \partial_c \eta, \eta'' \rangle
\end{matrix}
\right].
$$
Since $A$ is symmetric, we have $\langle \partial_c \eta, 1 \rangle = 2 \langle \partial_c \eta, \eta'' \rangle$. Furthermore, 
$$
\det(A) = - \frac{\pi}{2 c} \langle \partial_c \eta, 1 \rangle = -\frac{\pi}{2c} \mathcal{M}'(c),
$$
where $\mathcal{M}(c)$ is given by (\ref{mass-monotonicity}). 
Since $c > 0$, we have the following trichotomy from Proposition 2 in \cite{GMNP}:
\begin{itemize}
	\item If $\mathcal{M}'(c) > 0$, then $\det(A) < 0$, hence $A$ has one negative and one positive eigenvalue so that $\mathcal{L}|_{\mathcal{X}_c}$ admits one simple negative and a simple zero eigenvalue. \\
	
\item If $\mathcal{M}'(c) = 0$, then $\det(A) = 0$ but ${\rm tr}(A) < 0$, hence $A$ has one negative and one zero eigenvalue so that $\mathcal{L}|_{\mathcal{X}_c}$ admits a double zero eigenvalue and no negative eigenvalues. \\

\item If $\mathcal{M}'(c) < 0$, then $\det(A) > 0$, hence $A$ has two negative eigenvalues so that $\mathcal{L}|_{\mathcal{X}_c}$ admits  a simple zero eigenvalue and no negative eigenvalues.
\end{itemize}
The last case yields the assertion of the lemma.
\end{proof}

\begin{remark}
	Due to constraint (\ref{zero-mean-toy}), we have 
\begin{equation}
\label{M-Q}
	M(\eta) = \oint \eta du = - \oint (\partial_u \eta)^2 du = - Q(\eta), 
\end{equation}
	so that the criterion in Lemma \ref{lem-stab-2} is equivalent to the mapping of 
	$$
	(1,c_*) \ni c \mapsto \mathcal{Q}(c) := \oint (\eta')^2 du 
	$$
	being monotonically increasing for $c \in (1,c_*)$.
\end{remark}

Since $\mathcal{M}(c) \to 0$ as $c \to 1$ by (\ref{small-amplitude}) 
and $\mathcal{M}(c) < 0$ by (\ref{M-Q}), it is clear that 
$\mathcal{M}'(c) < 0$ for $c \gtrsim 1$. The next lemma asserts that 
$\mathcal{M}'(c) < 0$ for every $c \in (1,c_*)$.

\begin{lemma}
	\label{lem-stab-3}
The mapping (\ref{mass-monotonicity}) is monotonically decreasing for every $c \in (1,c_*)$.
\end{lemma}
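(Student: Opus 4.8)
The plan is to convert the abstract monotonicity statement into a sign analysis of a one–dimensional integral, and to exploit an algebraic recursion that forces a cancellation. Using the first integral (\ref{first-order}) on the single-lobe orbit, I write the mass as an integral over the profile values. Parametrizing by $\eta = \tfrac12\mu\cos\theta$, $\theta\in[0,\pi]$, with $\mu := 2\sqrt{2\mathcal{E}}\in(0,c^2)$, so that $c^2-2\eta = c^2-\mu\cos\theta$, one finds
$$
\mathcal{M}(c) = \mu \int_0^\pi \cos\theta \,\sqrt{c^2 - \mu \cos\theta}\; d\theta ,
$$
where $\mu=\mu(c)$ is determined implicitly by the period constraint $T(\mathcal{E},c)=2\pi$, which in these variables reads $\int_0^\pi \sqrt{c^2-\mu\cos\theta}\,d\theta = \pi$. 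The smoothness of $c\mapsto\mu(c)$ is inherited from the $C^1$ dependence of $\mathcal{E}(c)$ already used in Lemma \ref{lem-stab-2}.

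Next I would introduce the family $F_p(\mu,c):=\int_0^\pi (c^2-\mu\cos\theta)^p\,d\theta$ for $p\in\{-\tfrac12,\tfrac12,\tfrac32\}$ (all finite since $c^2-\mu\cos\theta\ge c^2-\mu>0$), together with $G_q:=\int_0^\pi \cos\theta\,(c^2-\mu\cos\theta)^q\,d\theta$. Differentiation under the integral sign gives $\partial_\mu F_p = -p\,G_{p-1}$ and $\partial_c F_p = 2cp\,F_{p-1}$, while the elementary identity $\mu\cos\theta = c^2-(c^2-\mu\cos\theta)$ yields the key relation $\mu G_q = c^2 F_q - F_{q+1}$. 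In this notation the constraint is $F_{1/2}=\pi$, and the same elementary identity recasts the mass as $\mathcal{M} = c^2 F_{1/2} - F_{3/2}$.

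I would then differentiate $\mathcal{M}(c) = c^2 F_{1/2}(\mu(c),c) - F_{3/2}(\mu(c),c)$, computing $\mu'(c) = -\partial_c F_{1/2}/\partial_\mu F_{1/2}$ from the constraint and substituting the relations above. The role of the algebraic identity is that the $F_{-1/2}$-terms produced by $\partial_c\mathcal{M}$ and by $\partial_\mu\mathcal{M}\cdot\mu'(c)$ cancel, collapsing the derivative to the closed form
$$
\mathcal{M}'(c) = c\left( \frac{3\,F_{-1/2}\,\mathcal{M}}{P} - F_{1/2} \right), \qquad P := c^2 F_{-1/2} - F_{1/2}.
$$

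Finally I would read off the signs. One checks $P = \mu\int_0^\pi \cos\theta\,(c^2-\mu\cos\theta)^{-1/2}\,d\theta>0$ by folding $[\tfrac{\pi}{2},\pi]$ onto $[0,\tfrac{\pi}{2}]$ via $\theta\mapsto\pi-\theta$, which turns the integrand into $\cos\phi\big[(c^2-\mu\cos\phi)^{-1/2}-(c^2+\mu\cos\phi)^{-1/2}\big]>0$ on $(0,\tfrac{\pi}{2})$; likewise $F_{\pm1/2}>0$ trivially. The remaining sign is that of $\mathcal{M}$ itself: taking the mean of (\ref{ode-model}) and integrating $(c^2-2\eta)\eta''$ by parts shows $\mathcal{M}+\mathcal{Q}=0$ with $\mathcal{Q}:=\oint(\eta')^2\,du>0$ for the nontrivial single-lobe orbit, whence $\mathcal{M}<0$. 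Both terms in the bracket are then negative and $\mathcal{M}'(c)<0$ follows. I expect the main obstacle to be recognizing and setting up the recursion $\mu G_q = c^2 F_q - F_{q+1}$ so that this cancellation occurs — without it the derivative is a sum of several elliptic-type integrals whose combined sign is opaque — and, secondarily, pinning down the positivity of $P$ through the folding argument.
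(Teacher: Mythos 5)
Your proof is correct, and it takes a genuinely different route from the paper's. Both arguments differentiate the mass implicitly along the period constraint, but the paper stays with the $\eta$-integral representations: eliminating $\mathcal{E}'(c)$ leads to $\mathcal{M}'(c) = \frac{2c}{\mathcal{E}|\partial_{\mathcal{E}}T|}\,\Delta(\mathcal{E}(c),c)$ with $\Delta$ a combination of four integrals, and its negativity needs two separate devices: a pointwise comparison of integrands (as in the proof of (\ref{period-decreasing})) for one product, and the symmetrization (Cauchy--Schwarz) identity
\begin{equation*}
\Bigl(\int \eta\, w\,d\eta\Bigr)^2 - \Bigl(\int w\,d\eta\Bigr)\Bigl(\int \eta^2 w\,d\eta\Bigr) = -\frac12\iint (\eta_1-\eta_2)^2\, w(\eta_1) w(\eta_2)\, d\eta_1 d\eta_2 < 0
\end{equation*}
for the remainder. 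Your substitution $\eta=\sqrt{2\mathcal{E}}\cos\theta$ plus the recursion $\mu G_q = c^2 F_q - F_{q+1}$ avoids both: I checked that the $c^3F_{-1/2}$ contributions of $\partial_c\mathcal{M}$ and $\partial_\mu\mathcal{M}\cdot\mu'(c)$ cancel exactly and that the closed form $\mathcal{M}'(c) = c\bigl(3F_{-1/2}\mathcal{M}/P - F_{1/2}\bigr)$ is correct. What this buys is that the sign becomes manifest from facts already in the paper: $\mathcal{M}<0$ is precisely the identity (\ref{M-Q}) derived from the zero-mean constraint (\ref{zero-mean-toy}), and your quantity $P = \mu G_{-1/2}$ equals $-2\mathcal{E}\,\partial_{\mathcal{E}}T(\mathcal{E},c)$, so its positivity is literally the monotonicity (\ref{period-decreasing}) of Lemma \ref{lemma-4} (your folding argument reproduces the paper's proof of that fact). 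The only hypotheses you use beyond algebra --- namely $\mu(c)<c^2$ so that the integrands are nonsingular, and $C^1$ dependence of $\mu$ on $c$ --- hold on the smooth branch exactly as in Lemma \ref{lem-stab-2}, so the argument is complete.
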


\begin{proof}
Differentiating $T(\mathcal{E}(c),c) = 2\pi$ in $c$ yields
	$$
	\partial_c T(\mathcal{E}(c),c) + \mathcal{E}'(c) \partial_{\mathcal{E}} T(\mathcal{E}(c),c) = 0,
	$$
where
\begin{align*}
	\partial_c T(\mathcal{E},c) &= 2c \int_{-\sqrt{2 \mathcal{E}}}^{\sqrt{2 \mathcal{E}}} \frac{d \eta}{\sqrt{2 \mathcal{E} - \eta^2} \sqrt{c^2 - 2 \eta}}, \\
	\partial_{\mathcal{E}} T(\mathcal{E},c) &= -\mathcal{E}^{-1} \int_{-\sqrt{2 \mathcal{E}}}^{\sqrt{2 \mathcal{E}}} \frac{\eta d \eta}{\sqrt{2 \mathcal{E} - \eta^2} \sqrt{c^2 - 2 \eta}},
\end{align*}
and we have used (\ref{tech-eq}) with the substitution $\eta = \sqrt{2 \mathcal{E}}x$. By using the same substitution, we define $\mathcal{M}(c) \equiv \mathcal{M}(\mathcal{E}(c),c)$ with 
$$
\mathcal{M}(\mathcal{E},c) := 2 \int_{-\sqrt{2 \mathcal{E}}}^{\sqrt{2 \mathcal{E}}} \frac{\eta \sqrt{c^2 - 2 \eta} d \eta}{\sqrt{2 \mathcal{E} - \eta^2}} = 
2 \sqrt{2 \mathcal{E}} \int_{-1}^{1} \frac{x \sqrt{c^2 - 2 \sqrt{2\mathcal{E}} x} d x}{\sqrt{1-x^2}},
$$
from which we obtain 
\begin{align*}
\partial_c \mathcal{M}(\mathcal{E},c) &= 2c \int_{-\sqrt{2 \mathcal{E}}}^{\sqrt{2 \mathcal{E}}} \frac{\eta d \eta}{\sqrt{2 \mathcal{E} - \eta^2} \sqrt{c^2 - 2 \eta}}, \\
\partial_{\mathcal{E}} \mathcal{M}(\mathcal{E},c) &= \mathcal{E}^{-1} \int_{-\sqrt{2 \mathcal{E}}}^{\sqrt{2 \mathcal{E}}} \frac{\eta \sqrt{c^2 - 2 \eta} d \eta}{\sqrt{2 \mathcal{E} - \eta^2} } - \mathcal{E}^{-1} \int_{-\sqrt{2 \mathcal{E}}}^{\sqrt{2 \mathcal{E}}} \frac{\eta^2 d \eta}{\sqrt{2 \mathcal{E} - \eta^2} \sqrt{c^2 - 2 \eta}}.
\end{align*}	
Since $\partial_{\mathcal{E}} T(\mathcal{E}(c),c) < 0$, we obtain 
\begin{align*}
\mathcal{M}'(c) &= \partial_c \mathcal{M}(\mathcal{E}(c),c) + \mathcal{E}'(c) \partial_{\mathcal{E}} \mathcal{M}(\mathcal{E}(c),c) \\
&= \frac{2c}{\mathcal{E}(c) |\partial_{\mathcal{E}} T(\mathcal{E}(c),c)|} \Delta(\mathcal{E}(c),c),
\end{align*}
where 
\begin{align*}
\Delta(\mathcal{E},c) &:= \left( \int_{-\sqrt{2 \mathcal{E}}}^{\sqrt{2 \mathcal{E}}} \frac{\eta d \eta}{\sqrt{2 \mathcal{E} - \eta^2} \sqrt{c^2 - 2 \eta}} \right)^2 \\
& \quad + \left( \int_{-\sqrt{2 \mathcal{E}}}^{\sqrt{2 \mathcal{E}}} \frac{d \eta}{\sqrt{2 \mathcal{E} - \eta^2} \sqrt{c^2 - 2 \eta}} \right) 
\left( \int_{-\sqrt{2 \mathcal{E}}}^{\sqrt{2 \mathcal{E}}} \frac{\eta (c^2 - 3 \eta) d \eta}{\sqrt{2 \mathcal{E} - \eta^2} \sqrt{c^2 - 2 \eta}}
\right).
\end{align*}
We show that $\Delta(\mathcal{E},c) < 0$, which implies that $\mathcal{M}'(c) < 0$. Indeed, since 
$$
\frac{\eta \sqrt{c^2 - 2 \eta}}{\sqrt{2\mathcal{E} - \eta^2}} < \frac{|\eta| \sqrt{c^2 + 2 |\eta|}}{\sqrt{2\mathcal{E} - \eta^2}}, \quad \eta \in (0,\sqrt{2\mathcal{E}}),
$$
we have 
$$
\left( \int_{-\sqrt{2 \mathcal{E}}}^{\sqrt{2 \mathcal{E}}} \frac{d \eta}{\sqrt{2 \mathcal{E} - \eta^2} \sqrt{c^2 - 2 \eta}} \right) 
\left( \int_{-\sqrt{2 \mathcal{E}}}^{\sqrt{2 \mathcal{E}}} \frac{\eta \sqrt{c^2 - 2 \eta} d \eta}{\sqrt{2 \mathcal{E} - \eta^2}} \right) < 0.
$$
The remaining part of $\Delta(\mathcal{E},c)$ is also negative since 
\begin{align*}
& \quad \left( \int \frac{\eta d \eta}{\sqrt{2 \mathcal{E} - \eta^2} \sqrt{c^2 - 2 \eta}} \right)^2 - \left( \int \frac{d \eta}{\sqrt{2 \mathcal{E} - \eta^2} \sqrt{c^2 - 2 \eta}} \right) 
\left( \int\frac{\eta^2 d \eta}{\sqrt{2 \mathcal{E} - \eta^2} \sqrt{c^2 - 2 \eta}}
\right) \\
& = \int\int \frac{\eta_1 \eta_2 - \eta_2^2}{\sqrt{2 \mathcal{E} - \eta_1^2} \sqrt{c^2 - 2 \eta_1} \sqrt{2 \mathcal{E} - \eta_2^2} \sqrt{c^2 - 2 \eta_2}} d \eta_1 d \eta_2 \\
& = -\frac{1}{2} \int\int \frac{(\eta_1-\eta_2)^2}{\sqrt{2 \mathcal{E} - \eta_1^2} \sqrt{c^2 - 2 \eta_1} \sqrt{2 \mathcal{E} - \eta_2^2} \sqrt{c^2 - 2 \eta_2}} d \eta_1 d \eta_2 < 0,
\end{align*}
where the integrations are defined on $[-\sqrt{2\mathcal{E}},\sqrt{2\mathcal{E}}]$. 
Hence $\Delta(\mathcal{E}(c),c) < 0$ and the assertion of the lemma has been proven.
\end{proof}

We are now ready to prove Theorem \ref{theorem-2}.

\begin{proof}[Proof of Theorem \ref{theorem-2}]
First, we prove that the two constraints (\ref{constraints-linear}) are preserved in the time evolution of the linearized equation (\ref{lin-model}).
Since $\Pi_0 \partial_u^{-1} \Pi_0$ is defined on zero-mean functions with the zero-mean constraint, taking the mean value of (\ref{lin-model}) yields 
$$
2c \frac{d}{dt} \langle 1, \hat{\eta} \rangle = 0.
$$
Multiplying (\ref{lin-model}) by $\eta''$ and integrating by parts, we obtain 
for any solution $\hat{\eta} \in C^0(\mathbb{R},H^1_{\rm per}(\mathbb{T}))$ 
\begin{align*}
2c \frac{d}{dt}  \langle \eta'', \hat{\eta} \rangle &= 
\langle (c^2 - 2 \eta) \eta'', \partial_u \hat{\eta} \rangle - 
\langle (1-2\eta'') \eta', \hat{\eta} \rangle \\
&= \langle (\eta')^2 - \eta, \partial_u \hat{\eta} \rangle - 
\langle (1-2\eta'') \eta', \hat{\eta} \rangle \\
&= \langle \eta' - 2 \eta' \eta'', \hat{\eta} \rangle - 
\langle (1-2\eta'') \eta', \hat{\eta} \rangle \\
&= 0,
\end{align*}
where (\ref{ode-model}) has been used with $\eta \in C^{\infty}_{\rm per}(\mathbb{T})$. Hence the two constraints (\ref{constraints-linear}) are preserved in time and the solution  $\hat{\eta} \in C^0(\mathbb{R},H^1_{\rm per}(\mathbb{T}))$ of the linearized equation (\ref{lin-model}) with $\hat{\eta}(\cdot,0) = \hat{\eta}_0$ and $\hat{\eta}_0 \in \mathcal{X}_c$ satisfies $\hat{\eta}(\cdot,t) \in \mathcal{X}_c$. Thus, we have 
$$
\langle 1, \hat{\eta}(\cdot,t) \rangle = 0, \quad 
\langle \eta'', \hat{\eta}(\cdot,t) \rangle = 0, \quad t \in \mathbb{R}.
$$
If we further decompose 
$$
\hat{\eta}(\cdot,t) = a(t) \eta' + w(\cdot,t), \quad t \in \mathbb{R},
$$
then $w(\cdot,t) \in H^1_{\rm per}(\mathbb{T}) \cap \mathcal{X}_c$ for $t \in \mathbb{R}$ satisfies the additional constraint 
$$
\langle \eta', w(\cdot,t) \rangle = 0, \quad t \in \mathbb{R}.
$$

Next, existence and uniqueness of solutions $\hat{\eta} \in C^0(\mathbb{R},H^1_{\rm per}(\mathbb{T}))$ of the linearized equation (\ref{lin-model}) such that $\hat{\eta}(\cdot,0) = \hat{\eta}_0$ follows by the energy method \cite{RR}.
The energy quadratic form $\langle \mathcal{L} \hat{\eta},\hat{\eta} \rangle$ is bounded and conserved for the solution $\hat{\eta} \in C^0(\mathbb{R},H^1_{\rm per}(\mathbb{T}))$ of the linearized equation (\ref{lin-model}).  Since $\mathcal{L} \eta' = 0$, 
we get 
\begin{align*}
\langle \mathcal{L} w(\cdot,t), w(\cdot,t) \rangle = 
\langle \mathcal{L} \hat{\eta}(\cdot,t), \hat{\eta}(\cdot,t) \rangle = \langle \mathcal{L}\hat{\eta}_0, \hat{\eta}_0 \rangle \leq \beta \| \hat{\eta}_0 \|^2_{H^1_{\rm per}},
\end{align*} 
for some fixed  $\beta > 0$. By Lemmas \ref{lem-stab-2} and \ref{lem-stab-3}, $\langle \mathcal{L} w(\cdot,t), w(\cdot,t) \rangle$ is coercive for $w(\cdot,t) \in H^1_{\rm per}(\mathbb{T}) \cap \mathcal{X}_c$ 
and is non-degenerate if $w(\cdot,t)$ is orthogonal to $\eta'$. Hence, we get the lower bound with some fixed $\alpha > 0$:
\begin{align*}
\alpha \| w(\cdot,t) \|^2_{H^1_{\rm per}} \leq \langle \mathcal{L} w(\cdot,t), w(\cdot,t) \rangle \leq \beta \| \hat{\eta}_0 \|^2_{H^1_{\rm per}},
\end{align*}
which implies the first estimate in (\ref{lin-stability}). In addition, we get from (\ref{lin-model}) due to $\mathcal{L} \eta' = 0$,
$$
2c a'(t) \eta' + 2c \partial_t w = - \Pi_0 \partial_u^{-1} \Pi_0 \mathcal{L} w,
$$
which allows us to control the unique $a \in C^1(\mathbb{R},\mathbb{R})$ from the bound 
\begin{align*}
2c a'(t) \| \eta' \|^2_{L^2} &= 
\langle (c^2 - 2 \eta) \eta', \partial_u w(\cdot,t) \rangle -
\langle (1-2\eta'') \eta, w(\cdot,t) \rangle \\
&\leq \gamma \| w(\cdot,t) \|_{H^1_{\rm per}},
\end{align*}
for some fixed $\gamma > 0$, which yields the second estimate in (\ref{lin-stability}).
\end{proof}

\begin{figure}[htb!]
	\centering
	\includegraphics[width=7cm,height=6cm]{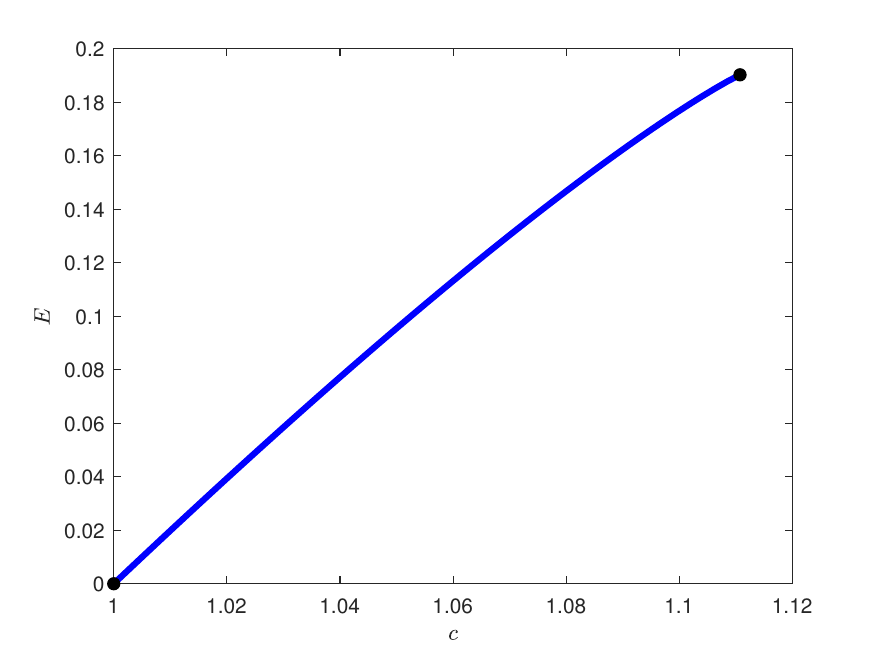} \hspace{1cm}
	\includegraphics[width=7cm,height=6cm]{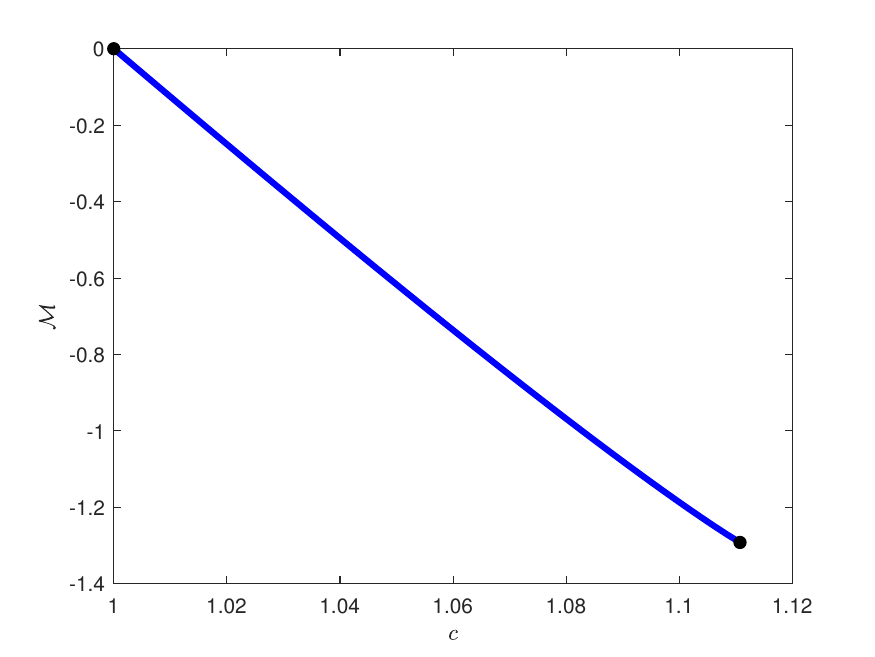}
	\caption{Dependence of $\mathcal{E}$ (left) and $\mathcal{M}$ (right) versus $c$ along the family of solutions of $T(\mathcal{E}(c),c) = 2\pi$. The black dots show the values $\mathcal{E}(1) = 0$, $\mathcal{M}(1) = 0$ and $\mathcal{E}(c_*) = \frac{\pi^4}{512}$,  $\mathcal{M}(c_*) = -\frac{\pi^3}{24}$.}
	\label{fig-3}
\end{figure}

Figure \ref{fig-3} display the dependence of $\mathcal{E}$ and $\mathcal{M}$ versus $c$ for $c \in (1,c_*)$ computed along the family of solutions of $T(\mathcal{E}(c),c) = 2\pi$. The mass integral can be computed in terms of complete elliptic integrals by using 3.141 (integral 20) in \cite{GR}:
\begin{align*}
\mathcal{M}(\mathcal{E},c) &= -2 \int_{-\sqrt{2\mathcal{E}}}^{\sqrt{2\mathcal{E}}} \frac{\sqrt{2\mathcal{E} - \eta^2}}{\sqrt{c^2 - 2 \eta}} d \eta \\
&= -4 \mathcal{E} \int_{-1}^1 \frac{\sqrt{1-x^2}}{\sqrt{c^2 - 2 \sqrt{2\mathcal{E}} x}} dx \\
&= - \frac{2}{3} \sqrt{c^2 + 2 \sqrt{2\mathcal{E}}} 
\left[ c^2 E\left(\sqrt{\frac{4 \sqrt{2\mathcal{E}}}{c^2 + 2 \sqrt{2\mathcal{E}}}}\right) - 
(c^2 - 2 \sqrt{2 \mathcal{E}}) K\left(\sqrt{\frac{4 \sqrt{2\mathcal{E}}}{c^2 + 2 \sqrt{2\mathcal{E}}}}\right) \right], 
\end{align*}
where $K(k)$ and $E(k)$ are complete elliptic integrals of the first and second kind, respectively. The values of $\mathcal{E} = \mathcal{E}(c)$ are computed numerically from $T(\mathcal{E}_c,c) = 2\pi$ by a root-finding algorithm. Figure \ref{fig-3} illustrates the monotonicity result of Lemma \ref{lem-stab-3}. Since $\mathcal{E}(c) \to 0$ as $c \to 1$ follows from (\ref{small-amplitude}), we have $\mathcal{M}(c) \to 0$ as $c \to 1$. On the other hand, 
$\mathcal{E}(c_*) \to \frac{c_*^4}{8} = \frac{\pi^4}{512}$ as $c \to c_*$ follows by Lemma \ref{lemma-4} and we compute from (\ref{quadratic}) that 
\begin{align*}
\mathcal{M}(c_*) &= \frac{1}{8} \int_0^{\pi} (\pi^2 - 4 \pi u + 2u^2) du = -\frac{\pi^3}{24},
\end{align*}
which agrees well with the numerical data in Figure \ref{fig-3}. 

\vspace{0.25cm}

{\bf Numerical methods.} The numerical data in Figure \ref{fig-0} is an extended version of the left panel of Figure \ref{fig-3}, where all roots of $T(\mathcal{E},c) = 2\pi$ 
have been computed numerically from a bisection method, see Figure \ref{fig-2}.
The numerical data in Figure \ref{fig-profiles} was obtained from finding roots of the implicit function 
$$
|u| = \int_{\eta}^{\eta_{\rm max}} \frac{\sqrt{c^2 - 2 \eta}}{\sqrt{2\mathcal{E}(c)-\eta^2}} d \eta,
$$
where $\eta_{\rm max} := \sqrt{2 \mathcal{E}(c)}$ for smooth profiles (left panel) and $\eta_{\rm max} := \frac{c^2}{2}$ for singular profiles (right panel) and $\mathcal{E}(c)$ is a root of $T(\mathcal{E}(c),c) = 2\pi$ obtained on the lower part of the bifurcation diagram in Figure \ref{fig-0}.

\vspace{0.25cm}

{\bf Acknowledgement.} This work is a part of the undergraduate thesis of S. Locke at McMaster University (2023-2024). The authors thank S. Dyachenko, P. Lushnikov, J. Weber, and X. Zhao for many discussions related to the content of this work. Figure \ref{fig-profiles} was prepared by S. Wang as a part of an undergraduate summer project (2024). D.~E.~Pelinovsky acknowledges the funding of this study provided by the grant No.~FSWE-2023-0004 through the State task program in the sphere of scientific activity of the Ministry of Science and Higher Education of the Russian Federation and grant No.~NSH-70.2022.1.5 for the State support of leading Scientific Schools of the Russian Federation.

\vspace{0.25cm}

{\bf Declaration of interests.} The authors report no conflict of interest.

\vspace{0.25cm}

{\bf Data availability statement.} The data that support the findings of this study are available upon request from the authors.

\appendix

\section{Euler equations after a conformal transformation}
\label{appendix}

Let $y = \eta(x,t)$ be the profile for the free surface of an incompressible and irrotational fluid in the $2\pi$-periodic domain and assume a flat bottom at $y = -h_0$, where the vertical velocity vanishes. For a proper definition of the fluid depth $h_0$, we add the zero-mean constraint on the free surface, that is, 
\begin{equation}
\label{zero-mean}
\oint \eta(x,t) dx = 0,
\end{equation}
which is invariant in the time evolution of Euler's equations. 

Let $\varphi(x,y,t)$ be the velocity potential, which satisfies the Laplace equation in the time-dependent spatial domain 
$$
D_{\eta} := \left\{ (x,y) \in \R^2 : \quad -\pi \leq x \leq \pi, \quad -h_0 \leq y \leq \eta(x,t) \right\}
$$
subject to the periodic boundary conditions at $x = \pm \pi$ and the Neumann boundary condition at $y = -h_0$. The formulation of the water wave problem is completed by two additional (kinematic and dynamic) conditions 
at the free surface $y = \eta(x,t)$:
\begin{align}
\label{euler}
\left.  \begin{array}{r} \displaystyle \eta_t + \varphi_x \eta_x - \varphi_y = 0, \\ \displaystyle
\varphi_t + \frac{1}{2} (\varphi_x)^2 + \frac{1}{2} (\varphi_y)^2 + \eta = 0, \end{array} \right\} \qquad \mbox{\rm at} \;\; y = \eta(x,t), 
\end{align}  
where the gravitational constant $g$ is set to unity for convenience. 

The method of conformal transformations is used to map the spatial domain $D_{\eta}$ to the flat domain 
$$
\mathcal{D} := \left\{ (u,v) \in \R^2 : \quad -\pi \leq u \leq \pi, \quad -h \leq v \leq 0 \right\},
$$
where $h$ may be different from $h_0$. The transformation is based on the conformal mapping $x + i y = z(u+iv,t)$, where $w := u + iv$ is a new complex variable and $z \in C^{\omega}(\mathcal{D})$ is a holomorphic function, the real and imaginary parts of which satisfy the Cauchy--Riemann equations 
$$
\frac{\partial x}{\partial u} = \frac{\partial y}{\partial v}, \qquad 
\frac{\partial x}{\partial v} = -\frac{\partial y}{\partial u}.
$$
To preserve the flat bottom $y = -h_0$ at $v = -h$, one needs to add 
the Neumann condition $\partial_v x |_{v = -h} = 0$, which ensures that 
$y(u,-h,t) = -h_0$ is $u$-independent. In addition, we require 
$x - u$ and $y - v$ be $2\pi$-periodic functions of $u \in \mathbb{T} := \mathbb{R} \backslash (2 \pi \mathbb{Z})$ to ensure that  $x(\pi,v,t) - x(-\pi,v,t) = 2\pi$.

Abusing notations we refer to $x = x(u,t)$ and $y = \eta(u,t)$ at the top boundary of $\mathcal{D}$:
$$
x(u,t) + i \eta(u,t) = z(u,t), \qquad \mbox{\rm at } \;\; v = 0.
$$
Similarly, we abuse notations for the velocity potential $\varphi(u,v,t)$ and define 
$$
\xi(u,t) := \varphi(u,v=0,t)
$$ 
on the flat top boundary of $\mathcal{D}$. 
Since the conformal transformation preserves the periodic boundary conditions and the zero vertical velocity condition at $v = -h$, the Laplace equation 
can be solved with the following Fourier series 
$$
\varphi(u,v,t) = \sum_{n \in \mathbb{Z}} \hat{\xi}_n(t) e^{i n u} \; \frac{\cosh (n (v + h))}{\cosh(n h)},
$$
where $\hat{\xi}_n(t)$ is the Fourier coefficient for $\xi(u,t) = \varphi(u,v=0,t)$.  Similarly, we obtain 
\begin{align*}
x(u,v,t) &= u + \sum_{n \in \mathbb{Z}} \hat{x}_n(t) e^{i n u} \; \frac{\cosh (n(v + h))}{\cosh(n h)}, \\
y(u,v,t) &= v + \hat{\eta}_0 + \sum_{n \in \mathbb{Z}} \hat{x}_n(t) e^{i n u} \; i \; \frac{\sinh (n(v + h))}{\cosh(n h)}.
\end{align*}
It follows from $y(u,-h,t) = -h_0$ that $\hat{\eta}_0 = h - h_0$. If $\hat{\eta}_0(t) = \frac{1}{2\pi} \oint \eta(u,t) du$ depends on time $t$, so is $h(t)$ which satisfies $\partial_t \eta (u,-h,t) - h'(t) \partial_v \eta(u,-h,t)$ for all $u \in \mathbb{T}$.

Reducing the Fourier series for $x(u,v,t)$ and $y(u,v,t)$ on $v = 0$ 
yields
\begin{align*}
x(u,t) &= u + \hat{x}_0(t) + \sum_{n \in \mathbb{Z} \backslash \{0\}} \hat{\eta}_n(t) e^{i n u} \; (-i) \coth(nh), \\
\eta(u,t) &= \hat{\eta}_0(t) + \sum_{n \in \mathbb{Z} \backslash \{0\}} \hat{\eta}_n(t) e^{i n u},
\end{align*}
with the correspondence $\hat{\eta}_n(t) = i \tanh(nh) \hat{x}_n(t)$ for $n \in \mathbb{Z} \backslash \{0\}$. 

Let us introduce the nonlocal operator $T_h$ with the Fourier symbol given by 
$$
\widehat{\left( T_h \right)}_n = i \tanh(hn), \qquad n \in \mathbb{Z},
$$
so that $\eta = \hat{\eta}_0 + T_h (x - u)$. The inverse of $T_h$ is only defined on the zero-mean functions with the Fourier symbol given by 
$$
\widehat{\left( T_h^{-1} \right)}_n = \left\{ \begin{array}{cl} -i \coth(hn), &\quad n \in \mathbb{Z} \backslash \{0\},  \\ 0, & \quad n = 0. \end{array} \right.
$$
Inverting $\eta = \hat{\eta}_0 + T_h (x - u)$ yields $x = u + \hat{x}_0 + T_h^{-1} \eta$ and 
$$
x_u = 1 + K_h \eta, 
$$
where $K_h := T_h^{-1} \partial_u$ is a linear, self-adjoint, positive operator on $L^2(\mathbb{T})$. We set $\hat{x}_0 = 0$ in $x = u + \hat{x}_0 + T_h^{-1} \eta$ without loss of generality. 

The equations of motion can be derived from the following Lagrangian, see \cite{DZ1} and \cite[Appendix A]{Lush1} for $h = \infty$, 
$$
\mathcal{L}(\xi,\eta,x) := \oint \xi(\eta_t x_u - \eta_u x_t) du + 
\frac{1}{2} \oint \xi T_h \xi_u du - \frac{1}{2} \oint \eta^2 x_u du + 
\oint f (\eta - T_h(x-u)) du, 
$$
where $f$ is the Lagrange multiplier with the zero mean value $\oint f du = 0$. Variation of $\mathcal{L}$ in $\xi$, $\eta$, and $x$
yields the system of equations 
\begin{equation}
\label{eq-motion}
\left\{ \begin{array}{l}
\eta_t x_u - \eta_u x_t + T_h \xi_u = 0, \\
-\xi_t x_u + \xi_u x_t - \eta x_u + f = 0, \\
\xi_t \eta_u - \xi_u \eta_t + \eta \eta_u + T_h f = 0,
\end{array}
\right. 
\end{equation}
with the additional constraint due to the reduction $h = h_0 + \frac{1}{2\pi} \oint \eta du$:
\begin{equation}
\label{add-constraints}
\frac{1}{2} \oint \xi (\partial_h T_h) \xi_u du -
\oint f (\partial_h T_h) (x-u) du = 0.
\end{equation}

Taking mean values in each equation of system (\ref{eq-motion}) and integrating by parts yields three conserved quantities:
\begin{align}
\label{conserved-mass}
M_1(\eta) &= \oint \eta x_u du = \oint \eta (1+K_h \eta) du = 0, \\
\label{conserved-mass-2}
M_2(\xi,\eta) &= \oint \xi x_u du = \oint \xi (1 + K_h \eta) du, \\
M_3(\xi,\eta) &= \oint \xi \eta_u du,
\label{conserved-mass-3}
\end{align}
where the constraint $M_1(\eta) = 0$ follows from the zero-mean constraint (\ref{zero-mean}) in physical coordinates. We express $f$ from the second equation of system (\ref{eq-motion}), substitute it into the third equation, and invert $T_h$ on the periodic functions with zero mean. This transforms the system (\ref{eq-motion}) to the following system of two equations for $\xi$ and $\eta$:
\begin{equation}
\label{time-Bab-eq}
\left\{ \begin{array}{l}
\eta_t x_u - \eta_u x_t + T_h \xi_u = 0, \\
\xi_t x_u - \xi_u x_t + \eta (1 + K_h \eta)  + T_h^{-1} ( \xi_t \eta_u - \xi_u \eta_t + \eta \eta_u ) = 0.
\end{array}
\right. 
\end{equation}
The constraint (\ref{add-constraints}) is rewritten in the equivalent form:
\begin{equation}
\label{add-constraints-last}
\frac{1}{2} \oint \xi (\partial_h T_h) \xi_u du +
\oint T_h^{-1}(\xi_t \eta_u - \xi_u \eta_t + \eta \eta_u) (\partial_h T_h) (T_h^{-1} \eta) du = 0.
\end{equation}

The conserved energy of system (\ref{time-Bab-eq}) is given by 
\begin{equation}
\label{conserved-Ham}
H(\xi,\eta) = \oint \left[ \xi T_h \xi_u - \eta^2 (1 + K_h \eta) \right] du. 
\end{equation}
To derive (\ref{conserved-Ham}), we multiply the first equation of system (\ref{time-Bab-eq}) by $\xi_t$ and the second equation by $\eta_t$, integrate over the period, and subtract one equation from another. After integration by parts, we get $\frac{d}{dt} H(\xi,\eta) = 0$ if and only if 
\begin{equation*}
-\frac{1}{2} h'(t) \oint \xi (\partial_h T_h) \xi_u du -
h'(t) \oint \eta \eta_u (\partial_h T_h^{-1}) \eta du 
- \oint (\xi_t \eta_u - \xi_u \eta_t ) (x_t - T_h^{-1} \eta_t) du = 0.
\end{equation*}
Since $x_t - T_h^{-1} \eta_t = h'(t) (\partial_h T_h^{-1}) \eta$,  $
\partial_h T_h^{-1} = - T_h^{-1} (\partial_h T_h) T_h^{-1}$, and $T_h^{-1}$ is skew-adjoint, the last constraint is identical to the constraint (\ref{add-constraints-last}) for every $h'(t)$. This proves the conservation of $H(\xi,\eta)$. 

The conserved quantities (\ref{conserved-mass}), (\ref{conserved-mass-2}), (\ref{conserved-mass-3}), and (\ref{conserved-Ham}) coincide with the conserved quantities for Euler's equation in physical coordinates \cite{BO82}, see also \cite{DZ1} for $h = \infty$.

In order to introduce the scalar model (\ref{single-eq}), we rewrite (\ref{time-Bab-eq}) in the reference frame moving with the wave speed $c$:
\begin{equation}
\label{system-appendix}
\left\{ \begin{array}{l}
\eta_t x_u - \eta_u x_t + T_h \xi_u - c \eta_u = 0, \\
\xi_t x_u - \xi_u x_t + \eta (1 + K_h \eta) - c \xi_u  + T_h^{-1} ( \xi_t \eta_u - \xi_u \eta_t + \eta \eta_u ) = 0,
\end{array}
\right. 
\end{equation}
where $u$ now stands for $u - ct$ and we have used the chain rule with 
\begin{align*}
\xi_t & \to \xi_t - c \xi_u, \\
\eta_t & \to \eta_t - c \eta_u, \\
x_t & \to x_t + c - c x_u.
\end{align*}
We introduce a change of variables by 
\begin{equation}
\label{decomposition}
\xi = c T_h^{-1} \eta + \zeta,
\end{equation}
after which the system (\ref{system-appendix}) can be rewritten in the form:
\begin{equation}
\left\{ \begin{array}{l}
\eta_t x_u - \eta_u x_t + T_h \zeta_u = 0, \\
\zeta_t x_u - \zeta_u x_t + \eta (1 + K_h \eta) - c \zeta_u + c x_t - c^2 K_h \eta \\
\qquad \qquad + T_h^{-1} ( \zeta_t \eta_u - \zeta_u \eta_t + \eta \eta_u + c \eta_u x_t - c \eta_t K_h \eta ) = 0.
\end{array}
\right. 
\label{system-eta-zeta}
\end{equation}
Substituting $- c \zeta_u = c T_h^{-1} (\eta_t x_u - \eta_u x_t)$ to the second equation of system (\ref{system-eta-zeta}) 
and taking the derivative of $x = u + T_h^{-1} \eta$ in $t$ yields 
\begin{align}
\notag
& \zeta_t x_u - \zeta_u x_t + T_h^{-1} (\zeta_t \eta_u - \zeta_u \eta_t ) 
+ h'(t) (\partial_h T_h^{-1}) \eta \\
& \qquad 
+ 2 c T_h^{-1} \eta_t  - c^2 K_h \eta  + \eta (1 + K_h \eta)  + \frac{1}{2} K_h \eta^2  = 0.
\label{full-single-eq}
\end{align}
The scalar model (\ref{single-eq}) follows by ignoring the constraint (\ref{add-constraints-last}) and the first equation of system (\ref{system-eta-zeta}) and by setting $\zeta \equiv 0$ and $h'(t) \equiv 0$ in (\ref{full-single-eq}). Babenko's equation (\ref{trav-Bab-eq}), which is the exact equation for traveling waves \cite{Babenko}, corresponds to the time-indepedent solutions of (\ref{system-eta-zeta}) and (\ref{full-single-eq}) with $\zeta \equiv 0$ and $h'(t) \equiv 0$ since $u$ in (\ref{single-eq}) stands for $u - ct$.

\section{Introducing the local model}
\label{app-B}

One popular model for fluid motion is the intermediate long-wave (ILW) equation written in the form 
\begin{equation}
\label{ILW}
\partial_t \eta + \eta \partial_u \eta = \mathcal{K}_h(\partial_u \eta)
\end{equation} 
where $\mathcal{K}_h$ is defined by the Fourier symbol 
\begin{equation*}
\widehat{\left( \mathcal{K}_h \right)}_n = 
\left\{ \begin{array}{cl} n \coth(hn), &\quad n \in \mathbb{Z} \backslash \{0\},  \\ h^{-1}, & \quad n = 0. \end{array} \right.
\end{equation*}
In comparison with (\ref{operator-K}), we have the correspondence 
$$
\mathcal{K}_h = K_h + \frac{1}{2\pi h} \oint \; \cdot \; du.
$$
The ILW equation (\ref{ILW}) is integrable by inverse scattering and many results on well-posedness and dynamics of nonlinear waves have been obtained for this fluid model, 
see review in \cite{Saut}. In the shallow water limit $h \to 0$, the scaling transformation 
$$
\eta(u,t) := h \tilde{\eta}(\tilde{u},\tilde{t}), \quad \tilde{u} := u + h^{-1}t, \quad \tilde{t} := ht
$$ 
recovers formally the Korteweg--de Vries (KdV) equation
\begin{equation*}
\partial_{\tilde{t}} \tilde{\eta} + \tilde{\eta} \partial_{\tilde{u}} \tilde{\eta} + \frac{1}{3} \partial_{\tilde{u}}^3 \tilde{\eta} = 0 
\end{equation*}
due to the asymptotic expansion 
\begin{equation*}
\mathcal{K}_h = \frac{1}{h} - \frac{1}{3} h \partial_u^2 + \mathcal{O}(h^3).
\end{equation*}
For completeness, in the deep water limit $h \to \infty$, the ILW equation (\ref{ILW}) becomes the Benjamin--Ono (BO) equation
\begin{equation*}
\partial_t \eta + \eta \partial_u \eta + \mathcal{H}(\partial^2_u \eta) = 0,
\end{equation*} 
where $\mathcal{H}$ is the periodic Hilbert transform defined by (\ref{Hilbert}). Both the KdV and BO equations are also integrable by inverse scattering. 

To obtain the local evolution equation (\ref{toy-model}) from the nonlocal model (\ref{single-eq}), we replace $K_h$ given by (\ref{operator-K}) with 
$$
\tilde{K}_h = K_h + \frac{1}{2\pi h} \oint \; \cdot \; du - \frac{1}{h} = \mathcal{K}_h - \frac{1}{h}.
$$
The difference between $\tilde{K}_h$ and $K_h$ appears in the local term $1/h$. It is removed from the mean term in the definition of $K_h$ and from all Fourier modes in the definition of $\tilde{K}_h$. Since $K_h = T_h^{-1} \partial_u$, we can similarly define 
$\tilde{K}_h = \tilde{T}_h^{-1} \partial_u$ and expand asymptotically as $h \to 0$:
$$
\tilde{K}_h = -\frac{1}{3} h \partial_u^2 + \mathcal{O}(h^3), \qquad 
\tilde{T}_h^{-1} = -\frac{1}{3} h \partial_u + \mathcal{O}(h^3).
$$
By using the scaling transformation 
$$
\eta(u,t) := h^{-1} \tilde{\eta}(\tilde{u},\tilde{t}), \qquad 
u := 3^{-1/2} \tilde{u}, \qquad 
t := 3^{-1/2} h^{1/2} \tilde{t}, \qquad 
c = h^{-1/2} \tilde{c}, 
$$
we obtain the formal limit of the model equation (\ref{single-eq}) as $h \to 0$ 
in the form 
\begin{equation}
- 2 \tilde{c} \partial_{\tilde{u}} \partial_{\tilde{t}} \tilde{\eta} = 
\left( -\tilde{c}^2 \partial_{\tilde{u}}^2 - 1 \right) \tilde{\eta} 
+ \tilde{\eta}  \partial_{\tilde{u}}^2 \tilde{\eta} + \frac{1}{2}  \partial_{\tilde{u}}^2 \tilde{\eta}^2.
\label{lin-toy-model}
\end{equation}
Expanding the derivatives, changing the sign, and removing the tilde notations yields the local model (\ref{toy-model}).

\end{document}